\definecolor{Myblue}{rgb}{0.0,0,0.9}
\definecolor{Mygreen}{rgb}{0.2,1,0}
\newtheorem{thm}{Theorem}[section]
\newtheorem{lem}[thm]{Lemma}
\newtheorem{prop}[thm]{Proposition}
\theoremstyle{definition}
\newtheorem{defn}[thm]{Definition}
\theoremstyle{definition}
\newtheorem{exmp}[thm]{Example}
\newtheorem{remark}[thm]{Remark}
\numberwithin{equation}{section}
\newcommand{\benu}{\begin{enumerate}}
\newcommand{\enu}{\end{enumerate}}
\newcommand{\bema}{\left[\begin{array}}
\newcommand{\ema}{\end{array}\right]}
\newcommand{\HVCenter}[1]{\setbox 0=\hbox{#1}%
        \dimen0=\wd0%
        \dimen1=\ht0%
        \divide\dimen0 by 2%
        \divide\dimen1 by 2%
        \hskip -\dimen0%
        \lower \dimen1%
        \box0%
        \hskip -\dimen0}
\newcommand{\HBCenter}[1]{\setbox 0=\hbox{#1}%
        \dimen0=\wd0%
        \dimen1=\ht0%
        \divide\dimen0 by 2%
        \hskip -\dimen0%
        \box0%
        \hskip -\dimen0}
\newcommand{\LTCenter}[1]{\setbox 0=\hbox{#1}%
        \dimen1=\ht0%
        \lower \dimen1%
        \box0%
        \hskip -\dimen0}
\newcommand{\HTCenter}[1]{\setbox 0=\hbox{#1}%
        \dimen0=\wd0%
        \dimen1=\ht0%
        \divide\dimen0 by 2%
        \hskip -\dimen0%
        \lower \dimen1%
        \box0%
        \hskip -\dimen0}
\newcommand{\RTCenter}[1]{\setbox 0=\hbox{#1}%
        \dimen0=\wd0%
        \dimen1=\ht0%
        \hskip -\dimen0%
        \lower \dimen1%
        \box0%
        \hskip -\dimen0}
\newcommand{\RBCenter}[1]{\setbox 0=\hbox{#1}%
        \dimen0=\wd0%
        \dimen1=\ht0%
        \hskip -\dimen0%
        \box0%
        \hskip -\dimen0}
\newcommand{\RVCenter}[1]{\setbox 0=\hbox{#1}%
        \dimen0=\wd0%
        \dimen1=\ht0%
        \divide\dimen1 by 2%
        \hskip -\dimen0%
        \lower \dimen1%
        \box0%
        \hskip -\dimen0}
\newcommand{\LVCenter}[1]{\setbox 0=\hbox{#1}%
        \dimen1=\ht0%
        \divide\dimen1 by 2%
        \lower \dimen1%
        \box0%
        \hskip -\dimen0}
\begin{document}
\sloppy

\title[A criterion for global dimension two  \, ]{
\small {\sc A criterion for global dimension two for strongly \\
simply connected schurian algebras}}

\author[Bordino]{Natalia Bordino}
\address{Natalia Bordino,
Departamento de Matem\'atica, Facultad de Ciencias Exactas y
  Naturales, Funes 3350, Universidad Nacional de Mar del Plata, 7600
  Mar del Plata, Argentina.}
\email{nataliabordino@gmail.com}

\author[Fern\'andez]{Elsa Fern\'andez}
\address{Elsa Fern\'andez,
Facultad de Ingenier\'{\i}a, Universidad Nacional de la
  Patagonia San Juan Bosco, 9120 Puerto Madryn, Argentina.}
\email{elsafer9@gmail.com}

\author[Trepode]{Sonia Trepode}
\address{Sonia Trepode,
Departamento de Matem\'atica, Facultad de Ciencias Exactas y
  Naturales, Funes 3350, Universidad Nacional de Mar del Plata, 7600
  Mar del Plata, Argentina.}
\email{strepode@gmail.com}
\begin{abstract}
The aim of this paper is to provide a criterion to determine, by
quivers with relations, when an  algebra has global dimension at
most two. In order to do that, we introduce a new class of
algebras of global dimension three, and we call them critical
algebras. Furthermore  we give a characterization of critical
algebras by quivers with relations. Our main theorem states that
if a strongly simply connected schurian algebra does not contain a
critical algebra as a full subcategory,\emph{\emph{}} then it has
global dimension at most two.
\end{abstract}

\thanks{This work is part of the Ph.D Thesis of Natalia Bordino, under the supervision of her advisers, Sonia Trepode and Elsa Fern\'andez, presented at Universidad Nacional de Mar del Plata in August of 2011. The third author is a researcher of CONICET, Argentina.}
\subjclass[2000]{Primary:
16G20, 
Secondary: 16G70, 
} \maketitle


\section{Introduction}
The algebras of global dimension two play an important roll in the
representation theory of finite dimensional algebras,  for
example, two remarkable classes of algebras of global dimension
two are tilted and quasitilted algebras. On other hand, in
\cite{A}, Amiot, introduced a new cluster category associated with
an algebra of global dimension two. Hence the interest for
algebras of global dimension two has revived recently in
connection with cluster categories.

An interesting problem is to find a criterion, by quivers with
relations, to determine when an algebra has global dimension two.
Some criterions of this type have been given in the literature, a
criterion was given by Green, Happel and Zacharia, in  \cite{ghz},
for monomial algebras, and  by Igusa and Zacharia, in \cite{iz}, in
the case of incidence algebras.

In this work we provide a criterion for a well known class of
algebras which are quotients of incidence algebras.  We recall that
a subcategory $B$ of $A$ is called \textsl{full} if
$\mbox{Hom}_B(Si, Sj) = \mbox{Hom}_A (Si, Sj)$, for all $i; j \in
(Q_B)_0$. In order to find the desired criterion, we introduce a new
family of algebras and we call them critical algebras. These
algebras have global dimension three and have the property that
every proper full subcategory has global dimension two.  We give
here a characterization of critical algebras in terms of quivers
with relations.

\pagebreak

{\textbf{Proposition.} Let $\Gamma$ be a critical algebra. Then
either $\Gamma$ or $\Gamma^{op}$ is one of the following algebras.

  \begin{picture}(10,100)

   \put(5,65){
             \put(0,25){$A_1:$}
      \put(0,0){\circle*{2}}
      \put(30,0){\circle*{2}}
      \put(60,0){\circle*{2}}
      \put(90,0){\circle*{2}}
      \put(2,0){\vector(1,0){26}}
      \put(32,0){\vector(1,0){26}}
      \put(62,0){\vector(1,0){26}}
            \qbezier[16](10,2)(30,20)(45,2)
            \qbezier[16](38,2)(53,20)(72,2)
    }

   \put(215,65){
             \put(0,25){$A_l:$}
             \put(90,-60){ $\mbox{\textit{for }} \: l \geq 2$}
      \put(0,0){\circle*{2}}
      \put(30,0){\circle*{2}}
      \put(60,-19){\HVCenter{\tiny $2$}}
      \put(60,20){\HVCenter{\tiny $1$}}
      \put(90,0){\circle*{2}}
      \put(60,-60){\HVCenter{\tiny $l$}}
      \put(2,0){\vector(1,0){25}}
      \put(32,2){\vector(3,2){22}}
      \put(32,-2){\vector(3,-2){22}}
      \put(30,-3){\vector(1,-2){26}}
      \put(65,16){\vector(3,-2){22}}
      \put(65,-16){\vector(3,2){22}}
      \put(64,-55){\vector(1,2){26}}
            \qbezier[2](60,-40)(60,-35)(60,-30)
            \qbezier[12](8,3)(30,2)(45,16)
            \qbezier[10](40,0)(60,0)(80,0)
            \qbezier[12](8,-3)(28,-2)(45,-15)
            \qbezier[12](8,-3)(20,-30)(45,-45)
    }
\end{picture}

  \begin{picture}(10,130)
   \put(3,70){
             \put(-2,45){$B_1:$}
      \put(0,20){\circle*{2}}
      \put(30,40){\circle*{2}}
      \put(30,0){\circle*{2}}
      \put(60,20){\circle*{2}}
      \put(60,-20){\circle*{2}}
      \put(90,0){\circle*{2}}
      \put(2,22){\vector(3,2){26}}
      \put(2,18){\vector(3,-2){26}}
      \put(32,39){\vector(3,-2){26}}
      \put(32,1){\vector(3,2){26}}
      \put(62,19){\vector(3,-2){26}}
      \put(32,-2){\vector(3,-2){26}}
      \put(62,-19){\vector(3,2){26}}
            \qbezier[12](10,20)(30,20)(50,20)
            \qbezier[12](40,0)(60,0)(80,0)
            \qbezier[12](47,35)(77,30)(80,13)
            \qbezier[12](5,10)(15,-10)(36,-10)
    }

   \put(155,40){
             \put(-2,75){$B_m:$}
             \put(145,-40){$\mbox{\textit{for }} \: m \geq 3$}
      \put(75,80){\circle*{2}}
      \put(0,40){\HVCenter{\tiny $1$}}
      \put(30,40){\HVCenter{\tiny $2$}}
      \put(61,40){\HVCenter{\tiny $3$}}
            \qbezier[2](75,40)(81,40)(87,40)
      \put(120,40){\HVCenter{\tiny $m-1$}}
      \put(170,40){\HVCenter{\tiny $m$}}
      \put(15,0){\HVCenter{\tiny $1'$}}
      \put(45,0){\HVCenter{\tiny $2'$}}
            \qbezier[2](68,0)(75,0)(82,0)
      \put(105,0){\circle*{2}}
      \put(155,0){\HVCenter{\tiny $m-1$}}
      \put(75,-40){\circle*{2}}

            \qbezier(3,49)(40,75)(70,80)
      \put(2.2,48){\vector(-3,-2){0.1}}
            \qbezier(36,46)(45,55)(73,78)
      \put(36,46){\vector(-1,-1){0.1}}
      \put(75,77){\vector(-1,-3){10}}
            \qbezier(116,48)(105,55)(78,78)
      \put(118,46){\vector(1,-1){0.1}}
            \qbezier(164,48)(110,75)(80,80)
      \put(166.2,46){\vector(3,-2){0.1}}

            \qbezier(0,33)(7,20)(11,8)
      \put(11.5,7){\vector(1,-4){0.1}}
            \qbezier(30,33)(37,20)(41,8)
      \put(41.5,7){\vector(1,-4){0.1}}
            \qbezier(132,33)(137,20)(142,8)
      \put(143,5){\vector(1,-3){0.1}}

            \qbezier(29,33)(23,20)(19,8)
      \put(18.7,7){\vector(-1,-4){0.1}}
            \qbezier(59,33)(53,20)(49,8)
      \put(48.7,7){\vector(-1,-4){0.1}}

            \qbezier(111,33)(109,15)(108,7)
      \put(107.5,4.5){\vector(-1,-4){0.1}}

            \qbezier(169,33)(165,20)(163,8)
      \put(162.7,7){\vector(-1,-4){0.1}}

            \qbezier(18,-7)(40,-35)(68,-40)
      \put(70.2,-40){\vector(1,0){0.1}}
            \qbezier(45,-7)(60,-25)(72,-37)
      \put(73,-38){\vector(1,-1){0.1}}
      \put(103,-3){\vector(-3,-4){26}}
            \qbezier(150,-7)(110,-35)(83,-40)
      \put(80,-40){\vector(-1,0){0.1}}

            \qbezier[12](60,75)(5,55)(15,14)
            \qbezier[12](68,70)(42,50)(44,14)
            \qbezier[16](90,72)(162,50)(155,12)

            \qbezier[12](29,25)(18,-10)(60,-34)
            \qbezier[12](120,27)(140,-10)(92,-33)

            \qbezier[12](0,20)(-15,-10)(30,-24)
            \qbezier[16](170,20)(205,-15)(130,-24)

    }

\end{picture}

  \begin{picture}(50,95)
         \put(100,5){
                \put(0,75){$Q_n:$}
                \put(90,-5){$\mbox{\textit{for }} \: n \geq 2$.}
      \put(50,80){\circle*{2}}
      \put(66,57){\HVCenter{\tiny $3$}}
      \put(38,57){\HVCenter{\tiny $2$}}
      \put(13,57){\HVCenter{\tiny $1$}}
      \put(104,55){\HVCenter{\tiny $n$}}
      \put(66,20){\circle*{2}}
      \put(104,20){\circle*{2}}
      \put(13,20){\circle*{2}}
      \put(38,20){\circle*{2}}
      \put(51,-5){\circle*{2}}
      \put(48,80){\vector(-3,-2){32}}
      \put(49,78){\vector(-1,-2){9}}
      \put(51,78){\vector(2,-3){12}}
      \put(52,80){\vector(2,-1){48}}
      \put(66,53){\vector(0,-1){30}}
      \put(38,53){\vector(0,-1){30}}
      \put(12,53){\vector(0,-1){30}}
      \put(104,52){\vector(0,-1){30}}
      \put(102,52){\vector(-3,-1){87}}
      \put(14,18){\vector(3,-2){34}}
      \put(39,18){\vector(1,-2){10}}
            \put(68,54){\line(3,-2){10}}
            \put(102,22){\line(-3,2){10}}
            \put(102,22){\vector(3,-2){0.10}}
      \put(65,18){\vector(-2,-3){13.5}}
      \put(102,19){\vector(-2,-1){48}}
      \put(13,53){\vector(3,-4){23}}
      \put(40,54){\vector(3,-4){23.5}}
             \qbezier[3](78,55)(83,55)(88,55)
             \qbezier[3](78,20)(83,20)(88,20)
    }
\end{picture}

\vskip .5cm

Before stating our main theorem, we need to recall some
definitions. A full subcategory $B$ of $A$ is called
\textit{convex} if any path in $A$ with source and target in $B$
lies entirely in $B$. An algebra $A$ is called \textit{triangular}
if $Q_A$ has no oriented cycles, and it is called
\textit{schurian} if, for all $x, y \in A_0$, we have
$\mbox{dim}_k A(x, y) \leq 1$. A triangular algebra $A$ is called
\textit{simply connected} if, for any presentation $(Q_A, I)$ of
$A$, the group $\pi_1(Q_A, I)$ is trivial, see \cite{BG}. It is
called \textit{strongly simply connected} if every full convex
subcategory of $A$ is simply connected, \cite{Sko}.

Now, we are in a position to state our main theorem.

\textbf{Theorem.} Let $A$ be a strongly simply connected schurian
algebra. Then if $A$ does not contain a critical algebra as a full
subcategory, it follows that $\mbox{gl.dim.}\:A \leq 2$.

The converse of the theorem does not hold.

The paper is organized in the following way.  In Section 2 we
introduce some preliminary concepts and notations. In Section 3 we
construct minimal projective resolutions for the simple modules
over a strongly simply connected schurian algebra and we compute
its projetive dimension. In Section 4 we introduce critical
algebras and we characterize them by quivers with relations.
Finally, we establish the main theorem of this paper.


\section{Preliminaries}

\subsection{Notation}

In this paper, by algebra, we always mean a basic and connected
finite dimensional algebra over an algebraically closed field $k$.
Given a quiver $Q$, we denote by $Q_0$ its set of vertices and by
$Q_1$ its set of arrows. A \textit{relation} in $Q$ from a vertex
$x$ to a vertex $y$ is a linear combination $\rho = \sum^m_{i=1}
\lambda_i w_i$ where, for each $i$, $\lambda_i \in k$ is non-zero
and $w_i$ is a path of length at least two from $x$ to $y$. A
relation in $Q$ is called a \textit{monomial} if it equals a path,
and a \textit{commutativity relation} if it equals the difference
of two paths. A relation $\rho$ is called \textit{minimal} if
whenever $ \rho= \sum_i \beta_i \rho_i\gamma_i $ where $\rho_i$ is
a relation for every $i$, then $\beta_i $ and $\gamma_i$ are
scalars for some index $i$ (see \cite{bmr}).  

We denote by $kQ$ the path algebra of $Q$ and by $kQ(x, y)$ the
$k$-vector space generated by all paths in $Q$ from $x$ to $y$.
For an algebra $A$, we denote by $Q_A$ its quiver. For every
algebra $A$, there exists an ideal $I$ in $kQ_A$, generated by a
set of relations, such that $A \simeq kQ_A/I$ . The pair $(Q_A,
I)$ is called a \textit{presentation} of $A$. An algebra $A =
kQ/I$ can equivalently be considered as a $k$-category of which
the object class $A_0$ is $Q_0$, and the set of morphisms $A(x,
y)$ from $x$ to $y$ is the quotient of $kQ(x, y)$ by the subspace
$I (x, y) = I \cap kQ(x, y)$.

In this work, we always deal with schurian triangular algebras.
For a vertex $x$ in the quiver $Q_A$, we denote by $e_x$ the
corresponding primitive idempotent, $S_x$ the corresponding simple
$A$-module, and by $P_x$ and $I_x$ the corresponding
indecomposable projective and injective A-module, respectively.


Let $Q$ be a connected quiver without oriented cycles. A
\textit{contour} $(p, q)$ in $Q$ from $x$ to $y$ is a pair of
parallel paths of positive length from $x$ to $y$. A contour $(p,
q)$ is called \textit{interlaced} if $p$ and $q$ have a common
vertex besides $x$ and $y$. It is called \textit{irreducible} if
there exists no sequence of paths $p = p_0,p_1, . . . ,p_m = q $
from $x$ to $y$ such that, for each $i$ , the contour
$(p_i,p_{i+1})$ is interlaced.

\subsection{Incidence algebras and their quotients}

Let $(\Sigma,\leq)$ be a finite poset (partially ordered set) with
$n$ elements. The incidence algebra $k\Sigma$ is the subalgebra of
the algebra $M_n(k)$ of all $n\times n$  matrices over $k$
consisting of the matrices $[a_{ij} ]$ satisfying $a_{ij} = 0$ if
$j \not\le i$ . The quiver $Q_{\Sigma}$ of $k\Sigma$ is the
(oriented) Hasse diagram of $\Sigma$, and $k\Sigma \simeq
Q_{\Sigma}/I_{\Sigma}$, where $I_{\Sigma}$ is generated by all
differences $p - q$, with $(p, q)$ a contour in $Q_{\Sigma}$. The
quiver $Q\Sigma$ has no bypass, that is, no subquiver of the form

\begin{picture}(0,40)
      \put(110,8){
      \put(0,0){\circle*{2}}
      \put(20,20){\circle*{2}}
      \put(40,20){\circle*{2}}
      \put(60,20){\circle*{2}}
      \put(80,20){\circle*{2}}
      \put(100,20){\circle*{2}}
      \put(120,20){\circle*{2}}
      \put(140,0){\circle*{2}}
      \put(2,2){\vector(1,1){16}}
      \put(22,20){\vector(1,0){16}}
      \put(42,20){\vector(1,0){16}}
            \qbezier[3](67,20)(70,20)(73,20)
      \put(82,20){\vector(1,0){16}}
      \put(102,20){\vector(1,0){16}}
      \put(122,18){\vector(1,-1){16}}
      \put(2,0){\vector(1,0){135}}
    }
\end{picture}

and, conversely, for any quiver $Q$ having no bypass, there exists
a poset $\Sigma$ such that $Q= Q_{\Sigma}$.

If $A$ is an incidence algebra and  $x \in (Q_A)_0 = A_0$, then it
is easy to see that the simple modules  $S_x$, indecomposable
projective $P_x$ and  indecomposable injective $I_x$, are
described, as representation, as follows:

\begin{itemize}
\item $S_x$ is given by $S_x(x)=k$ and $S_x(y)=0$ for $y \neq x$,
plus $S_x(\alpha)=0$, for any arrow $\alpha$.

\item $P_x$ is given by  $P_x(y)=k$ if $x \geq y$ and $P_x(y)=0$ in another case, plus
$P_x(\alpha)=1$ if $x \geq s(\alpha)$ and $P_x(\alpha)=0$
otherwise.

\item $I_x$ is constructed dually to $P_x$.
\end{itemize} \vspace{.1in}

Note that any incidence algebra $A = A(\Sigma)$, the full
subcategory (or full convex) of  $A$ coincide with the incidence
algebras of the full subposet (or full convex) of $\Sigma$.

In \cite{iz} is shown that, if $A = k \Sigma$ is  an incidence
algebra, then  $\mbox{gl.dim.}\: A \leq 2$ iff  $\Sigma$ does not
contain a full subposet isomorphic to $Q_n \; (n \geq 3)$ and all
full subposet of $\Sigma$ isomorphic to $Q_2$ is contained in a
full subposet  $Q$ of $\Sigma$, where

  \begin{picture}(50,100)
      \put(25,30){
             \put(-15,60){$Q:$}
      \put(0,40){\circle*{2}}
      \put(20,60){\circle*{2}}
      \put(40,40){\circle*{2}}
      \put(20,20){\circle*{2}}
      \put(0,0){\circle*{2}}
      \put(40,0){\circle*{2}}
      \put(20,-20){\circle*{2}}
      \put(2,42){\line(1,1){16}}
      \put(22,22){\line(1,1){16}}
      \put(2,2){\line(1,1){16}}
      \put(22,-18){\line(1,1){16}}
      \put(2,38){\line(1,-1){16}}
      \put(22,58){\line(1,-1){16}}
      \put(22,18){\line(1,-1){16}}
      \put(2,-2){\line(1,-1){16}}
    }

     \put(120,30){
             \put(-15,60){$Q_2:$}
      \put(0,40){\circle*{2}}
      \put(20,60){\circle*{2}}
      \put(40,40){\circle*{2}}
      \put(0,0){\circle*{2}}
      \put(40,0){\circle*{2}}
      \put(20,-20){\circle*{2}}
      \put(2,42){\line(1,1){16}}
      \put(2,2){\line(1,1){36}}
      \put(22,-18){\line(1,1){16}}
      \put(2,38){\line(1,-1){36}}
      \put(22,58){\line(1,-1){16}}
      \put(2,-2){\line(1,-1){16}}
      \put(0,2){\line(0,1){36}}
      \put(40,2){\line(0,1){36}}
    }

    \put(210,15){
                \put(0,75){$Q_n:$}
                \put(85,-5){$\mbox{for }\: n \geq 3$}
      \put(50,80){\circle*{2}}
      \put(66,55){\circle*{2}}
        \put(65,55){\RVCenter{\tiny $a_3$}}
      \put(38,55){\circle*{2}}
        \put(37,55){\RVCenter{\tiny $a_2$}}
      \put(12,55){\circle*{2}}
        \put(11,55){\RVCenter{\tiny $a_1$}}
      \put(104,54){\circle*{2}}
        \put(108,55){\LVCenter{\tiny $a_n$}}
      \put(66,20){\circle*{2}}
        \put(64,20){\RVCenter{\tiny $b_3$}}
      \put(104,20){\circle*{2}}
        \put(108,20){\LVCenter{\tiny $b_n$}}
      \put(12,20){\circle*{2}}
        \put(11,20){\RVCenter{\tiny $b_1$}}
      \put(38,20){\circle*{2}}
        \put(37,19){\RVCenter{\tiny $b_2$}}
      \put(51,-5){\circle*{2}}
      \put(48,80){\line(-3,-2){34}}
      \put(49,78){\line(-1,-2){10}}
      \put(51,78){\line(2,-3){14}}
      \put(52,80){\line(2,-1){50}}
      \put(66,53){\line(0,-1){30}}
      \put(38,53){\line(0,-1){30}}
      \put(12,53){\line(0,-1){30}}
      \put(104,52){\line(0,-1){30}}
      \put(102,52){\line(-3,-1){87}}
      \put(14,18){\line(3,-2){34}}
      \put(39,18){\line(1,-2){10}}
            \put(68,54){\line(3,-2){10}}
            \put(102,22){\line(-3,2){10}}
      \put(65,18){\line(-2,-3){13.5}}
      \put(102,19){\line(-2,-1){48}}
      \put(13,53){\line(3,-4){23}}
      \put(40,54){\line(3,-4){23.5}}
             \qbezier[3](78,55)(83,55)(88,55)
             \qbezier[3](78,20)(83,20)(88,20)
    }
\end{picture}

We are going to consider quotients of incidence algebras. For such
a quotient $A \simeq kQA/I$ , there exists a poset $\Sigma$ with
$Q_{\Sigma} = Q_A$ and, furthermore, $I = I_{\Sigma} + J$, where
$J$ is an ideal of $kQ_{\Sigma}$ generated by monomials. It is
well known that, if $A$ is schurian strongly simply connected,
then it is a quotient of an incidence algebra, see \cite{d},
\cite{al}.

Conversely, in \cite{acmt}, the authors proved that, if we have a
poset $\Sigma$ such that  $k \Sigma$ is strongly simply connected,
and we consider $A \cong k \Sigma / J$, where $J$ is an ideal of
$k \Sigma$ generated paths are not completely contained in
irreducible contours, then $A$ is strongly simply connected
algebra.

The above results allow us to describe the indecomposable
projective modules of  a strongly simply connected schurian
algebra. Indeed, let $A = kQ_{\Sigma} / I_{\Sigma} + J$ be a
strongly simply connected schurian algebra and let $x \in
(Q_A)_0$. Let $\rho_1, \ldots , \rho_r$ be minimal relations in
$J$ such that $x \geq s(\rho_i)$, for all $1 \leq i \leq r$. Then:

\begin{center}

$P^{A}_x(z)= \left\{ \begin{array}{ll}
k \  \  \  \mbox{if }\  x \geq z, \   z \not\ge t(\rho_i) \   \forall \   1 \leq i \leq r, \\  
0 \  \  \  \mathrm{otherwise},
\end{array} \right.$
\end{center}

\noindent with the induced morphisms.


\section{On the projective dimension of simple modules}

In this Section we consider strongly simply connected schurian
algebras. Our main objective is to describe the first terms of the
minimal projective resolution of a simple module. As a consequence
we are able to study the projective dimension of the simple
modules.

The following remark is important for our purposes.

\begin{remark}\label{rem:rp1}
Let  $A=kQ/I$ be a  strongly simply connected schurian algebra.
Let

\begin{equation}\label{rp1}
  \begin{picture}(80,10)
      \put(-55,5){
             \qbezier[2](-43,-2)(-40,-2)(-37,-2)
      \put(-32,-2){\vector(1,0){24}}
      \put(1,0){\HVCenter{\small $\textsf{Q}_3$}}
      \put(9,-2){\vector(1,0){24}}
      \put(20,6){\HVCenter{\footnotesize $f_3$}}
      \put(42,0){\HVCenter{\small $\textsf{Q}_2$}}
      \put(49,-2){\vector(1,0){24}}
      \put(60,6){\HVCenter{\footnotesize $f_{2}$}}
      \put(82,0){\HVCenter{\small $\textsf{Q}_1$}}
      \put(90,-2){\vector(1,0){24}}
      \put(102,6){\HVCenter{\footnotesize $f_1$}}
      \put(140,0){\HVCenter{\small $\textsf{Q}$\footnotesize $(S_i)= P_i$}}
      \put(166,-2){\vector(1,0){24}}
      \put(176,6){\HVCenter{\footnotesize $f_0$}}
      \put(198,0){\HVCenter{\footnotesize $S_i$}}
      \put(205,-2){\vector(1,0){24}}
      \put(235,-1){\HVCenter{\footnotesize $0$}}
     }
  \end{picture}
\end{equation} \vspace{.01in}

\noindent be the minimal projective resolution of $A$-module
simple $S_i$.

\begin{enumerate}
\item[(1)] The projective $\textsf{Q}_1$ is the direct sum of indecomposable projective
$P_a$, where there is an arrow from the vertex $i$ to the vertex
$a$.

\item[(2)] If $P_j$ is a direct summand of the projective $\textsf{Q}_{r+1}\   (r \geq
2)$, then  $S_j \in \mathrm{Top}$$\: \textsf{Q}_{r+1} =
\mathrm{Top}$$\: \mathrm{Ker}$$\ f_r$ and  $S_j$ is a composition
factor of $\textsf{Q}_r$. Moreover, if  $S_j$ is not a composition
factor of  $\mathrm{Ker}$$\: f_{r-1}$, then for all  $h$ such that
$S_h \in \mathrm{Top}$$\: \textsf{Q}_{r-1}$, it follows that all
paths from $h$ to $j$  are zero paths in $A$.

  \begin{picture}(80,65)
      \put(7,10){
      \put(-34,38){\vector(1,0){20}}
      \put(-42,38){\HVCenter{\footnotesize $\cdots$}}
      \put(0,40){\HVCenter{\small $\textsf{Q}_{r+1}$}}
      \put(120,40){\HVCenter{\small $\textsf{Q}_{r}$}}
      \put(60,0){\HVCenter{\footnotesize $\mbox{Ker}\: f_{r}$}}
      \put(12,38){\vector(1,0){94}}
      \put(10,30){\vector(4,-3){32}}
      \put(75,8){\vector(4,3){32}}

      \put(132,38){\vector(1,0){94}}
      \put(240,40){\HVCenter{\small $\textsf{Q}_{r-1}$}}

      \put(252,38){\vector(1,0){36}}
      \put(300,38){\HVCenter{\footnotesize $\cdots$}}

      \put(180,0){\HVCenter{\footnotesize $\mbox{Ker}\: f_{r-1}$}}
      \put(127,30){\vector(4,-3){32}}
      \put(196,7){\vector(4,3){32}}

      \put(180,45){\HVCenter{\footnotesize $f_{r}$}}
      \put(60,45){\HVCenter{\footnotesize $f_{r+1}$}}
      \put(270,45){\HVCenter{\footnotesize $f_{r-1}$}}
      }
  \end{picture}

\item[(3)] Repeating the process, we obtain that, for any indecomposable projective $A$-module $P_d$ which is a
direct summand of any term of  \textsc{(\ref{rp1})}, we have that
there exists a path from $i$ to $d$ in the quiver $Q$.
\end{enumerate}

\end{remark}

From Remark \ref{rem:rp1}, we get a description of the first and
second terms of the minimal projective resolution (\ref{rp1}). We
continue studying the behavior of some of the other terms of this
resolution.

From now on, we denote by  $\mu_{M}(S)$  the multiplicity of the
simple module  $S$ as a composition factor of the $A$-module $M$.

The following proposition describes the term $\textsf{Q}_2$.

\begin{prop}\label{prop:lemaA}
Let $A = kQ / I $ be a strongly simply connected schurian algebra.
Let $i$ and $b$ be vertices of $Q$, and let

  \begin{picture}(80,40)
      \put(110,18){
             \qbezier[3](-90,-2)(-86,-2)(-82,-2)
      \put(-77,-2){\vector(1,0){32}}
      \put(-35,0){\HVCenter{\small $\textsf{Q}_3$}}
      \put(-27,-2){\vector(1,0){32}}
      \put(-10,6){\HVCenter{\footnotesize $f_3$}}
      \put(16,0){\HVCenter{\small $\textsf{Q}_2$}}
      \put(25,-2){\vector(1,0){32}}
      \put(40,6){\HVCenter{\footnotesize $f_{2}$}}
      \put(68,0){\HVCenter{\small $\textsf{Q}_1$}}
      \put(78,-2){\vector(1,0){32}}
      \put(93,6){\HVCenter{\footnotesize $f_1$}}
      \put(122,0){\HVCenter{\small $\textsf{Q}_0$}}
      \put(133,-2){\vector(1,0){32}}
      \put(147,6){\HVCenter{\footnotesize $f_0$}}
      \put(175,0){\HVCenter{\footnotesize $S_i$}}
      \put(186,-2){\vector(1,0){32}}
      \put(225,-2){\HVCenter{\footnotesize $0$}}
     }
  \end{picture}

\noindent be the minimal projective resolution of simple
$A$-module $S_i$ . Then the following statements are equivalent:
\begin{enumerate}
\item $P_b$ is a direct summand of  $\textsf{Q}_2$,
\item there is a minimal relation from $i$ to $b$.
\end{enumerate}

\end{prop}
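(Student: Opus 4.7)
The plan is to identify $\TOP(\ker f_1)(b)$ with the space of minimal relations from $i$ to $b$ modulo scalars. Using Remark \ref{rem:rp1}(1), write $\textsf{Q}_1 = \bigoplus_{\alpha:\, i \to a} P_a$, so that $f_1$ sends the canonical top generator of each summand $P_a$ to the class of $\alpha$ in $\rad(P_i)$. Since $\textsf{Q}_2 \to \ker(f_1)$ is a projective cover, $P_b$ appears as a summand of $\textsf{Q}_2$ if and only if $S_b$ occurs in the top of $\ker(f_1)$, i.e.\ if and only if $\ker(f_1)(b) \neq \rad(\ker f_1)(b)$. Schurianness ensures $\dim_k P_a(b) \leq 1$, spanned (when non-zero) by the class of any path $a \to b$ non-zero in $A$, so $\textsf{Q}_1(b)$ has a concrete basis indexed by pairs $(\alpha_j, u_j)$ where $\alpha_j : i \to a_j$ is an arrow and $u_j : a_j \to b$ is a path non-zero in $A$.

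The main step is to set up a correspondence at vertex $b$: a relation $\rho = \sum_j \lambda_j w_j$ from $i$ to $b$ (with each $w_j = \alpha_j u_j$ of length at least two) produces $\xi_\rho := \sum_{j:\, u_j \notin I} \lambda_j [u_j] \in \ker(f_1)(b)$, because $f_1(\xi_\rho)$ equals the class of $\rho$ modulo the ideal-part $\sum_{j:\, u_j \in I} \lambda_j \alpha_j u_j$, which already lies in $I$ and so vanishes in $P_i(b)$; conversely, any element of $\ker(f_1)(b)$ lifts in this way to a relation. Now a decomposition $\rho = \sum_\ell \beta_\ell \rho_\ell \gamma_\ell$ with $\rho_\ell \in I$ translates into an expression for $\xi_\rho$: summands with $\beta_\ell$ scalar and $\gamma_\ell$ non-scalar realise $\xi_\rho$ as acted on by $[\gamma_\ell]$ from a vertex strictly above $b$, hence in $\rad(\ker f_1)(b)$, while summands with $\beta_\ell$ non-scalar contribute nothing to $\xi_\rho$ (after stripping the leading arrow $\alpha: i\to t(\alpha)$, the remainder $\beta'_\ell \rho_\ell \gamma_\ell$ already lies in $I$ and so vanishes in the relevant $P_{t(\alpha)}(b)$). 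Thus $\xi_\rho \in \rad(\ker f_1)(b)$ if and only if $\rho$ admits a decomposition with no scalar/scalar summand.

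From this translation both directions of the proposition follow: $\rho$ is a minimal relation from $i$ to $b$ if and only if $\xi_\rho$ is a non-zero element of $\TOP(\ker f_1)(b)$, if and only if $P_b$ is a summand of $\textsf{Q}_2$. The main technical subtlety is the asymmetry between the paper's two-sided minimality definition (allowing both a left factor $\beta_\ell$ and a right factor $\gamma_\ell$) and the essentially one-sided nature of $\rad$ acting on $\ker(f_1)(b)$ from vertices above $b$; resolving it depends on the schurian hypothesis and the explicit description of the projective module $P_i$ recalled at the end of Section 2, which together guarantee that the contribution of any non-scalar left factor $\beta_\ell$ at vertex $b$ is controlled by elements of $I$ and is therefore invisible in the top of $\ker(f_1)$.
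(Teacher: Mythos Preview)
Your argument is correct and takes a genuinely different route from the paper. The paper proceeds by an explicit case analysis tailored to the hypotheses: it enumerates the arrows $\alpha_j:i\to a_j$ and nonzero paths $w_j:a_j\leadsto b$, splits into the cases ``all $\alpha_jw_j=0$'' (monomial situation) versus ``some $\alpha_jw_j\neq 0$'' (commutativity situation), and in each case uses that $A$ is a quotient of an incidence algebra to locate a minimal relation. The converse is handled by the same dichotomy. By contrast, your proof sets up the linear map $\rho\mapsto\xi_\rho$ from $I(i,b)$ onto $\ker(f_1)(b)$ and shows directly that $\xi_\rho$ lies in $\rad(\ker f_1)(b)$ exactly when $\rho$ admits a decomposition $\sum_\ell\beta_\ell\rho_\ell\gamma_\ell$ with no term having both $\beta_\ell$ and $\gamma_\ell$ scalar. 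This is essentially the general Bongartz-type description of $\Ext^2_A(S_i,S_b)$ in terms of minimal relations, and it is more conceptual and more widely applicable than the paper's argument. What the paper's approach buys is concreteness: one sees explicitly that in this class of algebras every minimal relation from $i$ to $b$ is either a single zero path or the difference of two parallel nonzero paths, information your abstract argument does not surface.

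One small correction: the ``technical subtlety'' you flag --- that a non-scalar left factor $\beta_\ell$ contributes nothing to $\xi_\rho$ --- is resolved simply by the fact that $I$ is a two-sided ideal (stripping the first arrow from $\beta_\ell\rho_\ell\gamma_\ell$ leaves an element of $I$, hence zero in $P_{t(\alpha)}(b)$). Neither the schurian hypothesis nor the description of $P_i$ from Section~2 is needed here; your argument in fact proves the proposition for an arbitrary bound quiver algebra with admissible ideal.
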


\begin{proof} If $P_b$ is a direct summand of
$\textsf{Q}_2$, then there exist a vertex  $a_1 \neq b$ such that
$P_{a_1}$ is a direct summand of $\textsf{Q}_1$ and a nonzero path
$w_1: a_1 \leadsto b$ in the algebra $A$.

Let $a_1, a_2, \ldots, a_k$ all vertices that satisfy: $P_{a_j}$
is a direct summand of  $\textsf{Q}_1$, $w_j:a_j \leadsto b$ is a
nonzero path in $A$, for $j=1, \ldots,k$. We denote $\alpha_j$ the
corresponding arrow from the vertex $i$ to vertex $a_j$. Then, in
the quiver  $Q$, we have the following situation:

  \begin{picture}(80,100)
      \put(130,30){
      \put(-5,20){\HVCenter{\footnotesize $i$}}
      \put(45,62){\HVCenter{\footnotesize $a_1$}}
      \put(45,40){\HVCenter{\footnotesize $a_2$}}
      \put(45,-22){\HVCenter{\footnotesize $a_k$}}
      \put(110,20){\HVCenter{\footnotesize $b$}}
      \put(0,22){\vector(1,1){38}}
      \put(18,50){\HVCenter{\footnotesize $\alpha_1$}}
      \put(2,21){\vector(2,1){36}}
      \put(20,24){\HVCenter{\footnotesize $\alpha_2$}}
      \put(0,18){\vector(1,-1){38}}
      \put(18,-8){\HVCenter{\footnotesize $\alpha_k$}}
             \qbezier[2](45,14)(45,10)(45,6)

      \qbezier(50,60)(85,50)(106,24)
      \put(88,50){\HVCenter{\footnotesize $w_1$}}
      \qbezier(50,40)(85,30)(106,20)
      \put(83,25){\HVCenter{\footnotesize $w_2$}}
      \qbezier(50,-20)(85,0)(106,16)
      \put(83,-10){\HVCenter{\footnotesize $w_3$}}

      \put(72,51.3){\vector(2,-1){0.1}}
      \put(68,34.3){\vector(3,-1){0.1}}
      \put(72,-7.4){\vector(2,1){0.1}}
      }
  \end{picture}

If $\alpha_j w_j = 0$, for all  $j=1, \ldots,k$, then there are
monomial relations $\rho_1, \rho_2, \ldots, \rho_k$ in $I$, such
that $s(\rho_j)=i$ and $t(\rho_j)=q_j$, for all $j=1, \ldots,k$,
where  $a_j > q_j \geq b$ and $q_j$ is a vertex of the path $w_j$.
We can assume that $\rho_1, \rho_2, \ldots, \rho_k$ are minimal
relations in $I$.

If some $q_j = b$, we obtain the result. Suppose that  $q_j \neq
b$, for all $j=1, \ldots,k$. Then, for each $j=1, \ldots,k$, it
follows that  $S_{q_j}$ is not a composition factor of
$\mbox{rad}\ P_i$ and $S_{q_j} \in \mbox{Top Ker}\  f_1 =
\mbox{Top}\ \textsf{Q}_2$, which is a contradiction. Therefore,
there exists a minimal relation that starts at $i$ and ends at
$b$.

Now, if for some  $1 \leq j\leq k$, $\alpha_j w_j \neq 0$, since
$A$ is a quotient of an incidence algebra, it follows that all
parallel paths
 to $\alpha_j w_j$ are nonzero. In particular, we get that
$\alpha_{j+1} w_{j+1} \neq 0$ and also $\alpha_j w_j -
\alpha_{j+1} w_{j+1} \in I$. Then there is a commutativity
relation from  $i$ to $b$. A similar argument shows that some of
these $k$ commutativity relations must be minimal.

Conversely, let  $\rho:i \leadsto b $ be a minimal relation. Then
we have again a similar situation to the one described in the
previous figure, where the paths  $w_1, w_2, \ldots, w_k$  could
be zero paths in the algebra  $A$.

If $\rho$ is a monomial relation, then we can assume that $\rho =
\alpha_1 w_1 =0$ in $A$ and $w_1 \neq 0$ in $A$. Since $A$ is a
quotient of an incidence algebra, it follows that  $\alpha_j w_j
=0$ in $A$, for $1 \leq j \leq k$, and we get that  $S_b \notin
\mbox{rad}\  P_i$. Moreover, since  $\rho$ is a minimal relation,
it must be $S_b \in \mbox{Top Ker}\  f_1 = \mathrm{Top}\
\textsf{Q}_2$. Consequently, $P_b$ is a direct summand of
$\textsf{Q}_2$.

Now suppose that  $\rho$ is a minimal commutativity relation.
Without loss of generality, we can assume that $\rho = \alpha_1
w_1 - \alpha_2 w_2$, with $\alpha_1 w_1 \neq 0$, $\alpha_2 w_2
\neq 0$.

Then $S_b$ is a composition factor of  $P_i$. Then, since $A$ is
schurian, $\mu_{{\scriptsize \mbox{rad}} P_i}(S_b)$ is either $0$
or $1$. Since $\mu_{P_i}(S_b) = k$, then $\mu_{{\scriptsize
\mbox{Ker}} f_1}(S_b) \geq 1$. Hence the minimality of $\rho$
implies that  $S_b \in \mbox{Top \  Ker}\ f_1$; i.e., $P_b$ is a
direct summand of $\textsf{Q}_2$.
\end{proof}

Next, we recall the notion of convex hull of two vertices and some
useful results.

Let  $A = kQ/I$ be an algebra  and let $i,j$ be vertices of  $Q$.
The  \textit{convex hull between $i$ and $j$}, $\mbox{Conv}(i,j) =
kQ'/I'$ is the subalgebra of $A$ given by the quiver $Q'$
\begin{itemize}
\item $(Q')_0 = \{ k \in Q_0 \; / \mbox{ there are walks } i \leadsto k \leadsto j \}$
\item $(Q')_1 = \{ \alpha \in Q_1 \; / s(\alpha) \mbox{ and } t(\alpha) \in (Q')_0 \}$
\end{itemize}

\noindent and $I'$ is generated by induced relations.

Note that $C = \mbox{Conv}(i,j)$, as $k$-category is a full and
convex subcategory of $A$. Under these conditions, it follows from
\cite{apt} that $\mbox{Ext}^i_A (X,Y) \cong \mbox{Ext}^i_C (X,Y)$
for all  $i \geq 0$ and $X,Y \in \mbox{mod} \: C$.

It follows that, if the algebra $A$ is  strongly simply connected
and schurian, then so is $C = \mbox{Conv}(i,j)$.

\begin{lem}
Let $A = kQ / I $ be a strongly simply connected schurian algebra,
let $i$ and  $j$ be  vertices of $Q$, and let

  \begin{picture}(80,40)
      \put(120,18){
             \qbezier[3](-90,-2)(-86,-2)(-82,-2)
      \put(-77,-2){\vector(1,0){32}}
      \put(-35,0){\HVCenter{\small $\textsf{Q}_3$}}
      \put(-27,-2){\vector(1,0){32}}
      \put(-10,6){\HVCenter{\footnotesize $f_3$}}
      \put(16,0){\HVCenter{\small $\textsf{Q}_2$}}
      \put(25,-2){\vector(1,0){32}}
      \put(40,6){\HVCenter{\footnotesize $f_{2}$}}
      \put(68,0){\HVCenter{\small $\textsf{Q}_1$}}
      \put(78,-2){\vector(1,0){32}}
      \put(93,6){\HVCenter{\footnotesize $f_1$}}
      \put(122,0){\HVCenter{\small $\textsf{Q}_0$}}
      \put(133,-2){\vector(1,0){32}}
      \put(147,6){\HVCenter{\footnotesize $f_0$}}
      \put(175,0){\HVCenter{\footnotesize $S_i$}}
      \put(186,-2){\vector(1,0){32}}
      \put(225,-2){\HVCenter{\footnotesize $0$}}
     }
  \end{picture}

\noindent be the minimal projective resolution of the simple
$A$-module $S_i$. If $P_j$ is a direct summand of $\textsf{Q}_3$,
then $S_j$ is not a composition factor of the term $\textsf{Q}_0$.
\end{lem}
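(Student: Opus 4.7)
I argue by contradiction. Suppose that $P_j$ is a direct summand of $\textsf{Q}_3$ while, at the same time, $S_j$ is a composition factor of $\textsf{Q}_0 = P_i$. The second hypothesis provides a nonzero path $w\colon i\leadsto j$ in $A$; factoring off the initial arrow $\alpha\colon i\to a$ of $w$ as $w=\alpha\,v$, the suffix $v\colon a\leadsto j$ must itself be nonzero in $A$ (otherwise $w=\alpha v=0$), and by Remark~\ref{rem:rp1}(1) we have $S_a\in\mathrm{Top}\,\textsf{Q}_1$.

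The key technical step is to invoke the ``moreover'' clause of Remark~\ref{rem:rp1}(2) with $r=2$: the pair $(h,v)=(a,v)$ exhibits an $h$ with $S_h\in\mathrm{Top}\,\textsf{Q}_1$ admitting a nonzero path $h\leadsto j$ in $A$, so the contrapositive forces $S_j$ to be a composition factor of $\mathrm{Ker}\,f_1$. Combining the short exact sequence
\[
0\longrightarrow \mathrm{Ker}\,f_1\longrightarrow \textsf{Q}_1\stackrel{f_1}{\longrightarrow}\mathrm{rad}\,P_i\longrightarrow 0
\]
with the schurianness of $A$ (so $\mu_{\mathrm{rad}\,P_i}(S_j)\le 1$ and $\mu_{P_a}(S_j)\le 1$ for every $a$), a multiplicity count on $S_j$ then forces $\mu_{\textsf{Q}_1}(S_j)\ge 2$. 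Since $\textsf{Q}_1=\bigoplus_{\alpha\colon i\to a}P_a$, this yields two distinct arrows $\alpha_r\colon i\to a_r$ ($r=1,2$) admitting nonzero paths $v_r\colon a_r\leadsto j$ in $A$. Since $A$ is a quotient of an incidence algebra, parallel nonzero paths must coincide in $A$, so $\alpha_1v_1=\alpha_2v_2$ and $\rho=\alpha_1v_1-\alpha_2v_2$ is a commutativity relation in $I$ from $i$ to $j$.

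To close, the plan is to upgrade $\rho$ into a \emph{minimal} commutativity relation from $i$ to $j$: reducing to the convex hull $C=\mathrm{Conv}(i,j)$ (which is again strongly simply connected schurian and carries the same groups $\mathrm{Ext}^*(S_i,S_j)$ by the result of Assem--de la Pe\~na--Trepode cited above) and choosing, among all pairs of parallel nonzero paths from $i$ to $j$ in $C$, one whose contour is irreducible, the strongly-simply-connected hypothesis together with the incidence-algebra quotient structure guarantees minimality of the resulting relation. Proposition~\ref{prop:lemaA} then places $P_j$ as a direct summand of $\textsf{Q}_2$, and the desired contradiction is obtained by tracking, via minimality of the projective covers $\textsf{Q}_3\twoheadrightarrow\mathrm{Ker}\,f_2$ and $\textsf{Q}_2\twoheadrightarrow\mathrm{Ker}\,f_1$, that a top $S_j$-generator of $\mathrm{Ker}\,f_2\subset\mathrm{rad}\,\textsf{Q}_2$ cannot coexist with the $P_j$-summand already present in $\textsf{Q}_2$ once schurianness is imposed.

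The main obstacle lies in this closing step: selecting the irreducible contour so that $\rho$ is actually minimal, and then ruling out the simultaneous appearance of $P_j$ as a direct summand of both $\textsf{Q}_2$ and $\textsf{Q}_3$. This is where the full strength of the strongly-simply-connected hypothesis, combined with the schurian property, is used essentially; without either condition the commutativity relation $\rho$ need not be minimal and the syzygy structure could in principle accommodate both summands.
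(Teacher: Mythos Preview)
Your approach diverges substantially from the paper's, and the closing steps you flag as ``the main obstacle'' are indeed genuine gaps that I do not see how to close along the lines you suggest.

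The paper's argument is short and structural: assuming $S_j$ is a composition factor of $P_i$, it passes to $C=\mathrm{Conv}(i,j)$ and observes that \emph{every} path in $C$ is a subpath of a nonzero path $i\leadsto j$ (parallel paths agree in a quotient of an incidence algebra), so $C$ has no monomial relations and is therefore itself an incidence algebra. Since $P_j$ sits in $\textsf{Q}_3$ one gets $\mathrm{Ext}^3_C(S_i,S_j)\neq 0$ via the convexity isomorphism, so $\mathrm{gl.dim}\,C\ge 3$. Igusa--Zacharia's criterion for incidence algebras then forces $C$ to contain a subposet of type $Q_n$ ($n\ge 3$) or a bad $Q_2$, and either one contradicts strong simple connectedness. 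No minimal-relation bookkeeping is needed.

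In your plan, two steps are unsubstantiated. First, the upgrade of $\rho=\alpha_1v_1-\alpha_2v_2$ to a \emph{minimal} relation from $i$ to $j$: choosing an irreducible contour from $i$ to $j$ does not guarantee this, and in fact there may be \emph{no} minimal relation with endpoints $i,j$ at all (take $C$ with $i\to a_1,a_2\to m\to j$; the only minimal relation ends at $m$, and every contour from $i$ to $j$ is interlaced). You would need an independent reason, under \emph{both} hypotheses simultaneously, that such a minimal relation exists---and you have not supplied one. Second, even granting that $P_j$ lands in $\textsf{Q}_2$, your claimed contradiction with $P_j$ also appearing in $\textsf{Q}_3$ does not follow from schurianness alone: $S_j$ can perfectly well lie in $\mathrm{Top}\,\mathrm{Ker}\,f_2\subset\mathrm{rad}\,\textsf{Q}_2$ via a different summand $P_b$ of $\textsf{Q}_2$ admitting a nonzero path $b\leadsto j$, and nothing in your multiplicity count rules this out. (As a minor point, the $\mathrm{Ext}$-invariance under convex restriction is the Auslander--Platzeck--Todorov result cited in the paper as \cite{apt}, not Assem--de la Pe\~na--Trepode.)

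The cleanest fix is to abandon the minimal-relation route and instead use the observation that, under the assumption $\mu_{P_i}(S_j)=1$, the convex hull $C$ is an incidence algebra; then Igusa--Zacharia plus strong simple connectedness finishes immediately.
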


\begin{proof} Suppose $S_j$ is a composition factor of the term $\textsf{Q}_0$,
where  $\textsf{Q}_0$ is the  indecomposable projective module
associated with the vertex  $i$. Since the algebra $A$ is
schurian, we have that $\mu_{\textsf{Q}_0}(S_j) = 1$. Then in $C =
\mathrm{Conv}$$(i,j)$ there is no monomial relations. Therefore,
$C$ is an incidence algebra. Moreover, since  $S_j \in
\mathrm{Top}$$\: \textsf{Q}_3$, there exists a morphism  $\pi:
\textsf{Q}_3 \longrightarrow S_j$. Considering the push out of
$\pi$ and $f_3$ we have the following commutative diagram:

  \begin{picture}(80,60)
      \put(112,47){
             \qbezier[3](-90,-2)(-86,-2)(-82,-2)
      \put(-77,-2){\vector(1,0){32}}
      \put(-35,0){\HVCenter{\small $\textsf{Q}_3$}}
      \put(-27,-2){\vector(1,0){32}}
      \put(-10,6){\HVCenter{\footnotesize $f_3$}}
      \put(16,0){\HVCenter{\small $\textsf{Q}_2$}}
      \put(25,-2){\vector(1,0){32}}
      \put(40,6){\HVCenter{\footnotesize $f_{2}$}}
      \put(68,0){\HVCenter{\small $\textsf{Q}_1$}}
      \put(78,-2){\vector(1,0){32}}
      \put(93,6){\HVCenter{\footnotesize $f_1$}}
      \put(122,0){\HVCenter{\small $\textsf{Q}_0$}}
      \put(133,-2){\vector(1,0){32}}
      \put(147,6){\HVCenter{\footnotesize $f_0$}}
      \put(175,0){\HVCenter{\footnotesize $S_i$}}
      \put(186,-2){\vector(1,0){32}}
      \put(225,-2){\HVCenter{\footnotesize $0$}}

      \put(-83,-42){\HVCenter{\footnotesize $0$}}
            \put(-77,-42){\vector(1,0){32}}
      \put(-35,-40){\HVCenter{\footnotesize $S_j$}}
      \put(-27,-42){\vector(1,0){32}}
      \put(16,-40){\HVCenter{\small $\textsf{Q'}$}}
      \put(25,-42){\vector(1,0){32}}
      \put(68,-40){\HVCenter{\small $\textsf{Q}_1$}}
      \put(78,-42){\vector(1,0){32}}
      \put(122,-40){\HVCenter{\small $\textsf{Q}_0$}}
      \put(133,-42){\vector(1,0){32}}
      \put(175,-40){\HVCenter{\footnotesize $S_i$}}
      \put(186,-42){\vector(1,0){32}}
      \put(225,-42){\HVCenter{\footnotesize $0$}}

      \put(175,-7){\vector(0,-1){26}}
      \put(183,-20){\HVCenter{\footnotesize $Id$}}

      \put(122,-7){\vector(0,-1){26}}
      \put(130,-20){\HVCenter{\footnotesize $Id$}}

      \put(68,-7){\vector(0,-1){26}}
      \put(76,-20){\HVCenter{\footnotesize $Id$}}

      \put(16,-7){\vector(0,-1){26}}

      \put(-35,-7){\vector(0,-1){26}}
      \put(-27,-20){\HVCenter{$\pi$}}
     }
  \end{picture}

\noindent with exact rows.

Since $S_j \in \mathrm{Top\  Ker}$$\: f_2$, it follows that  $\pi$
does not factor through  $\textsf{Q}_2$; and consequently,
$\mathrm{Ext}$$^3_A (S_i, S_j) \neq 0$. Since $\mathrm{Ext}$$^3_C
(S_i, S_j) \cong \mathrm{Ext}$$^3_A (S_i, S_j)$, we have that
$dp_C\: S_i = 3$.

Then,  $C$ is an incidence algebra with global dimension at least
three. By \cite{iz}, either $C = k \Sigma$ must contain some graph
$Q_n$, for $n \geq 3$, or there is a subposet $\Sigma'$ of
$\Sigma$ isomorphic  to $Q_2$ that is not contained in any
subposet of the form

\begin{picture}(50,45)
    \put(150,-10){
      \put(0,40){\circle*{2}}
      \put(20,50){\circle*{2}}
      \put(40,40){\circle*{2}}
      \put(20,30){\circle*{2}}
      \put(0,20){\circle*{2}}
      \put(40,20){\circle*{2}}
      \put(20,10){\circle*{2}}
      \put(2,42){\line(2,1){15}}
      \put(22,31){\line(2,1){15}}
      \put(2,21){\line(2,1){15}}
      \put(22,11){\line(2,1){15}}
      \put(2,38){\line(2,-1){15}}

      \put(22,49){\line(2,-1){15}}
      \put(22,28){\line(2,-1){15}}
      \put(2,18){\line(2,-1){15}}
    }
\end{picture}

\noindent In both cases  $C$ is not strongly simply connected.
Therefore, there is a monomial  relation in $C$, and we get that
$S_j$ is not a composition factor of  $\textsf{Q}_0$.
\end{proof}

From now on, we consider the convex hull $C = \mbox{Conv}(i,j)
\cong kQ_C / I_C$ between two vertices $i$ and $j$ such that:
\begin{itemize}
\item $pd_C \: S_k < pd_C \: S_i$, for all $k \in (Q_C)_0$.
\item $\mbox{Ext}^3_A (S_i, S_j) \neq 0$, and if  $\mbox{Ext}^3_A (S_i, S_k) \neq
0$ for $k \in (Q_C)_0, \; k \neq j$, then $k \not> j$.
\end{itemize}

\begin{lem}\label{lem:pozo-fuente}
Let $A= kQ/I$ be a strongly simply connected schurian algebra. Let
$S_i$ be a simple $A$-module with  $pd_A\: S_i =3$ and let

  \begin{picture}(80,40)
      \put(110,18){
      \put(-82,-1){\HVCenter{\footnotesize $0$}}
      \put(-77,-2){\vector(1,0){32}}
      \put(-35,0){\HVCenter{\small $\textsf{Q}_3$}}
      \put(-27,-2){\vector(1,0){32}}
      \put(-10,6){\HVCenter{\footnotesize $f_3$}}
      \put(16,0){\HVCenter{\small $\textsf{Q}_2$}}
      \put(25,-2){\vector(1,0){32}}
      \put(40,6){\HVCenter{\footnotesize $f_{2}$}}
      \put(68,0){\HVCenter{\small $\textsf{Q}_1$}}
      \put(78,-2){\vector(1,0){32}}
      \put(93,6){\HVCenter{\footnotesize $f_1$}}
      \put(122,0){\HVCenter{\small $\textsf{Q}_i$}}
      \put(133,-2){\vector(1,0){32}}
      \put(147,6){\HVCenter{\footnotesize $f_0$}}
      \put(175,0){\HVCenter{\footnotesize $S_i$}}
      \put(186,-2){\vector(1,0){32}}
      \put(225,-2){\HVCenter{\footnotesize $0$}}
     }
  \end{picture}

\noindent be the minimal projective resolution of $S_i$ in $A$.
Let $P_j$ be a direct summand $\textsf{Q}_3$ and $C =
\mathrm{Conv}$$(i,j)$. Then, $dp_C \: S_i = 3$ in $C$.

\end{lem}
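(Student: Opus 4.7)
The plan is to read off the nonvanishing of $\mathrm{Ext}^3_A(S_i,S_j)$ directly from the fact that $P_j$ sits in $\mathsf{Q}_3$, transport it to $C$ via the Auslander--Platzeck--Todorov isomorphism from \cite{apt}, and use the same transport in the other direction to get the matching upper bound from $pd_A S_i = 3$.

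\textbf{Step 1 (lower bound at $A$).} Because the resolution $\mathsf{Q}_\bullet \to S_i$ is minimal, each differential $f_r$ has image in $\mathrm{rad}\,\mathsf{Q}_{r-1}$, so the induced maps $\mathrm{Hom}_A(\mathsf{Q}_{r-1},S_j) \to \mathrm{Hom}_A(\mathsf{Q}_r,S_j)$ are zero (they land in $\mathrm{Hom}_A(\mathrm{rad}\,\mathsf{Q}_{r-1},S_j)=0$). Hence
\[
\mathrm{Ext}^3_A(S_i,S_j) \;\cong\; \mathrm{Hom}_A(\mathsf{Q}_3,S_j).
\]
Since $P_j$ is a direct summand of $\mathsf{Q}_3$ and $\mathrm{Hom}_A(P_j,S_j)\neq 0$, we get $\mathrm{Ext}^3_A(S_i,S_j)\neq 0$.

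\textbf{Step 2 (transport to $C$).} By definition $i,j\in (Q_C)_0$, so $S_i$ and $S_j$ both lie in $\mathrm{mod}\,C$. The subcategory $C=\mathrm{Conv}(i,j)$ is full and convex in $A$, so by \cite{apt}
\[
\mathrm{Ext}^k_C(X,Y) \;\cong\; \mathrm{Ext}^k_A(X,Y) \qquad \forall\, k\ge 0,\ \forall\, X,Y\in\mathrm{mod}\,C.
\]
Applied with $k=3$, $X=S_i$, $Y=S_j$, this gives $\mathrm{Ext}^3_C(S_i,S_j)\neq 0$, hence $pd_C\,S_i\ge 3$.

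\textbf{Step 3 (upper bound).} Since $pd_A\,S_i=3$, we have $\mathrm{Ext}^m_A(S_i,M)=0$ for every $A$-module $M$ and every $m\ge 4$; this holds in particular for $M=S_k$ with $k\in (Q_C)_0$. Applying the APT isomorphism a second time, $\mathrm{Ext}^m_C(S_i,S_k)=\mathrm{Ext}^m_A(S_i,S_k)=0$ for all simple $C$-modules $S_k$ and all $m\ge 4$. As $pd_C\,S_i$ is detected on simple modules, we conclude $pd_C\,S_i\le 3$. Combining with Step~2 yields $pd_C\,S_i=3$.

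\textbf{Where I expect any difficulty.} Conceptually nothing is hard here; the only points that require care are (i) pinning down that minimality of the resolution is what forces $\mathrm{Ext}^3_A(S_i,S_j)$ to literally equal $\mathrm{Hom}_A(\mathsf{Q}_3,S_j)$, so that a direct summand $P_j$ of $\mathsf{Q}_3$ actually produces a nonzero extension (as opposed to merely giving a candidate that could be killed by a coboundary), and (ii) checking the hypothesis of \cite{apt} applies, namely that $C=\mathrm{Conv}(i,j)$ is a full convex $k$-subcategory of $A$, which is immediate from the construction of the convex hull recalled just before the lemma.
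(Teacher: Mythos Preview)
Your argument is correct and follows essentially the same line as the paper's: obtain $\mathrm{Ext}^3_A(S_i,S_j)\neq 0$ from the minimal resolution, transport via \cite{apt}, and bound above using $pd_A S_i=3$. The only differences are cosmetic: the paper extracts the nonvanishing of $\mathrm{Ext}^3_A(S_i,S_j)$ from the pushout argument in the preceding lemma rather than from the vanishing-of-differentials computation you give in Step~1, and your Step~3 is in fact more careful than the paper's (you check $\mathrm{Ext}^m_C(S_i,S_k)=0$ for \emph{all} simple $C$-modules $S_k$, while the paper only writes $\mathrm{Ext}^4_C(S_i,S_j)=0$ before concluding).
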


\begin{proof} From the previous lemma, it follows that
$\mbox{Ext}^3_A (S_i,S_j) \neq 0$ and $\mbox{Ext}^4_A (S_i,S_j) =
0$. Since $C$ is a full and convex subcategory of $A$, we have
that
 $\mbox{Ext}^3_C (S_i,S_j) \neq 0$ and $\mbox{Ext}^4_C
(S_i,S_j) = 0$. Therefore,  $dp_C \: S_i = 3$. \end{proof}

\begin{prop}\label{prop:lemaB}
Let $A = kQ / I $ be a strongly simply connected schurian algebra.
Let $i$ and $j$ be vertices of  $Q$, and let

  \begin{picture}(80,40)
      \put(110,18){
             \qbezier[3](-90,-2)(-86,-2)(-82,-2)
      \put(-77,-2){\vector(1,0){32}}
      \put(-35,0){\HVCenter{\small $\textsf{Q}_3$}}
      \put(-27,-2){\vector(1,0){32}}
      \put(-10,6){\HVCenter{\footnotesize $f_3$}}
      \put(16,0){\HVCenter{\small $\textsf{Q}_2$}}
      \put(25,-2){\vector(1,0){32}}
      \put(40,6){\HVCenter{\footnotesize $f_{2}$}}
      \put(68,0){\HVCenter{\small $\textsf{Q}_1$}}
      \put(78,-2){\vector(1,0){32}}
      \put(93,6){\HVCenter{\footnotesize $f_1$}}
      \put(122,0){\HVCenter{\small $\textsf{Q}_0$}}
      \put(133,-2){\vector(1,0){32}}
      \put(147,6){\HVCenter{\footnotesize $f_0$}}
      \put(175,0){\HVCenter{\footnotesize $S_i$}}
      \put(186,-2){\vector(1,0){32}}
      \put(225,-2){\HVCenter{\footnotesize $0$}}
     }
  \end{picture}

\noindent be the minimal projective resolution of $S_i$ in $A$.
Consider the following set of vertices:

\begin{itemize}

\item $\mathcal{R }$$= \{b \in Q_0 : P_b $ is a direct summand of
$\textsf{Q}_2$ and there is a nonzero path $b \leadsto j \}$,

 \item $\mathcal{S}$ $= \{a \in Q_0 : P_a $ is a direct summand of $\textsf{Q}_1$ and there is $b \in \mathcal{R}$ such that  $a
\leadsto b$ is a nonzero path$\}$,

\item $r = \mathrm{Card}\:$$\mathcal{R }$ and $s =
\mathrm{Card}\:$$\mathcal{S }$.
\end{itemize}

If $s \geq r$, then the following statements are equivalent:

\begin{enumerate}
\item $P_j$ is a direct summand of $\textsf{Q}_3$,
\item there are at least  $s-r+1$  monomial relations $a \leadsto j$ with $a \in \mathcal{S}$ and
at least one (monomial or commutative) relation  from $a \in
\mathrm{S}$ to the vertex $j$ is a minimal relation.
\end{enumerate}
\end{prop}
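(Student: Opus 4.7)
The plan is to analyze the map $f_2:\textsf{Q}_2\to\textsf{Q}_1$ locally at the vertex $j$, exploiting the explicit descriptions of the summands given by Remark \ref{rem:rp1} and Proposition \ref{prop:lemaA}, and to detect the appearance of $P_j$ in $\textsf{Q}_3$ via the projective cover of $\Ker f_2$. Writing the summands of $\textsf{Q}_1$ as $P_a$ for arrows $\alpha_a:i\to a$ and the summands of $\textsf{Q}_2$ as $P_b$ indexed by minimal relations $\rho_b=\sum_a \alpha_a w^{(b)}_a:i\leadsto b$, the differential $f_2$ is determined by $e_b\mapsto (w^{(b)}_a)_a$. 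Evaluating at the vertex $j$ yields a linear map $f_2(j):\textsf{Q}_2(j)\to\textsf{Q}_1(j)$ whose source has a basis indexed by $\mathcal{R}$ (since $P_b(j)\neq 0$ iff $b\leadsto j$ is nonzero in $A$), and whose target has a basis indexed by those arrows $i\to a$ with $a\leadsto j$ nonzero. The $(a,b)$-entry of the matrix $M$ representing $f_2(j)$ is the scalar in $A(a,j)\cong k$ given by the composite path $a\leadsto b\leadsto j$, so $M_{a,b}$ vanishes precisely when this composite is zero in $A$. In particular, only rows indexed by $a\in\mathcal{S}$ can be nonzero.

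For the direction $(1)\Rightarrow(2)$, I assume $P_j$ is a direct summand of $\textsf{Q}_3$, i.e.\ $S_j\in\TOP\Ker f_2$. Then there is an element of $\Ker f_2$ supported at $j$ which is not a radical multiple of kernel elements at strictly larger vertices; in particular $\Ker M\neq 0$. The hypothesis $s\geq r$ implies that if all $s$ rows of $M$ indexed by $\mathcal{S}$ were nonzero and linearly independent, then $M$ would have full column rank $r$, forcing $\Ker M=0$. Hence at least $s-r+1$ rows indexed by $\mathcal{S}$ must vanish identically. For each such $a$, every composite $a\leadsto b\leadsto j$ (with $b\in\mathcal{R}$) is zero in $A$, while the factors $a\leadsto b$ and $b\leadsto j$ are individually nonzero, so the vanishing is witnessed by a monomial relation from $a$ ending at $j$ (if a monomial kills the composite earlier on the path, composing with the remaining segment yields one ending at $j$). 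This produces the required $s-r+1$ monomial relations. Finally, the non-radicality of the chosen top element forces at least one of the relations $a\leadsto j$ with $a\in\mathcal{S}$, monomial or commutativity, to be minimal: otherwise the element could be rewritten as a radical combination of kernel elements at higher vertices and would already lie in $\rad\Ker f_2$, contradicting $S_j\in\TOP\Ker f_2$.

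For the converse $(2)\Rightarrow(1)$, the $s-r+1$ monomial relations $a\leadsto j$ with $a\in\mathcal{S}$ annihilate the corresponding rows of $M$, so $\mathrm{rank}\,M\leq r-1$ and consequently $\Ker M\neq 0$ yields at least one kernel vector supported at $j$. The existence of a minimal relation $a\leadsto j$ with $a\in\mathcal{S}$ then guarantees, by the same mechanism that drives the proof of Proposition \ref{prop:lemaA}, that this kernel element cannot be obtained from radical contributions at larger vertices; hence it represents a nonzero class in $\TOP\Ker f_2$ and $P_j$ appears as a direct summand of $\textsf{Q}_3$.

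The main obstacle is the careful handling of $\rad\Ker f_2$: separating kernel elements that are genuinely new at vertex $j$ from those arising by radical multiplication of kernel elements at strictly larger vertices. This is precisely the role of the minimality hypothesis, and it requires tracking which vanishings of composites $a\leadsto b\leadsto j$ are already accounted for by the relations $\rho_b$ (and propagated through $f_2$) versus which correspond to truly new minimal relations ending at $j$. The hypothesis $s\geq r$ is what converts the row-counting into a strict inequality forcing at least one additional kernel dimension at $j$.
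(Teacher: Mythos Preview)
Your overall framework---evaluating $f_2$ at the vertex $j$ and analyzing the resulting linear map---is sound and parallels the paper's multiplicity count. However, the crucial step in your $(1)\Rightarrow(2)$ argument does not go through as written. From $\Ker M\neq 0$ (equivalently $\mathrm{rank}\,M<r$) you conclude that at least $s-r+1$ of the rows indexed by $\mathcal{S}$ must vanish identically. This inference is invalid: the negation of ``all $s$ rows are nonzero and linearly independent'' is only ``some row is zero \emph{or} the rows are linearly dependent,'' and linear dependence among nonzero $\{0,1\}$-rows is perfectly possible. Nothing in your setup forces distinct nonzero rows to be linearly independent, so the row-count you need does not follow from a rank bound alone.

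The paper avoids this trap by a direct multiplicity computation rather than a rank argument. Using the preceding lemma one has $\mu_{P_i}(S_j)=0$, hence $\mu_{\Ker f_1}(S_j)=\mu_{\textsf{Q}_1}(S_j)=:q$; then the exactness at $\textsf{Q}_2$ gives
\[
r=\mu_{\textsf{Q}_2}(S_j)=\mu_{\Ker f_1}(S_j)+\mu_{\Ker f_2}(S_j)=q+\alpha
\]
with $\alpha>0$, so $q<r$ and therefore $s-q\geq s-r+1$. Here $s-q$ is \emph{exactly} the number of $a\in\mathcal{S}$ with $\mu_{P_a}(S_j)=0$, i.e.\ with a monomial relation $a\leadsto j$; no linear-independence hypothesis is needed. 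In your language this amounts to observing that $f_2(j)$ is actually \emph{surjective} onto $\textsf{Q}_1(j)$ (because $\mu_{P_i}(S_j)=0$), so $\mathrm{rank}\,M$ equals $\dim\textsf{Q}_1(j)=q$, the number of \emph{nonzero} rows---this is the missing ingredient that converts your rank bound into a count of vanishing rows. Your $(2)\Rightarrow(1)$ direction is essentially correct; the paper replaces your appeal to ``the same mechanism as Proposition~\ref{prop:lemaA}'' with an explicit two-case check (monomial versus commutativity minimal relation) showing $S_j\in\TOP\Ker f_2$.
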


\begin{proof} Suppose that  $P_j$ is a direct summand of  $P_3$, $\mathcal{S}$$= \{a_1,$ $ \ldots , a_s\}$
and $\mathcal{R}$$= \{b_1, \ldots , b_r\}$. If $\mu_{P_{a_k}}
(S_j) = 1$, for all $1 \leq k \leq s$, then
$$\mu_{{\scriptsize \mbox{Ker}} f_1}(S_j) = \mu_{\textsf{Q}_1}(S_j) =
\Sigma^s_{k=1}\: \mu_{P_{a_k}}(S_j) = s \geq r = \Sigma^r_{h=1}\:
\mu_{P_{b_h}}(S_j) = \mu_{\textsf{Q}_2}(S_j),$$ since
$\mu_{P_i}(S_j)= 0$. Since $\mu_{\textsf{Q}_2}(S_j) =
\mu_{{\scriptsize \mbox{Ker}} f_1}(S_j) + \mu_{{\scriptsize
\mbox{Ker}} f_2}(S_j)$, it follows that $\mu_{{\scriptsize
\mbox{Ker}} f_1}(S_j)$ $= s = r = \mu_{\textsf{Q}_2}(S_j)$. This
implies that $\mu_{{\scriptsize \mbox{Ker}} f_2}(S_j) = 0$, a
contradiction. Therefore, there is $a \in \mathcal{S}$ such that
$\mu_{P_{a}} (S_j) = 0$.

Let $\mu_{\textsf{Q}_1} (S_j) = \mu_{{\scriptsize \mbox{Ker}}
f_1}(S_j) = q$ , i.e., there are $s-q$ monomial relations  $a
\leadsto j$ with $a \in \mathcal{S}$. Let $\mu_{\textsf{Q}_3}
(S_j) = \mu_{{\scriptsize \mbox{Ker}} f_2}(S_j)  = \alpha > 0$.
Then
$$r = \mu_{\textsf{Q}_2}(S_j) = \mu_{{\scriptsize \mbox{Ker}} f_1}(S_j) +
\mu_{{\scriptsize \mbox{Ker}} f_2}(S_j) = q + \alpha,$$ implies
that $q < r$. Then, $s-q > s - r \geq 0$. As a consequence, we get
that there are at least  $s-r+1$  monomial relations $a \leadsto
j$ with $a \in \mathcal{S}$. Also, if none of the relations from
$a \in \mathrm{S}$ to $j$ is minimal, then $S_j$ can not be in
$\mbox{Top Ker}\: f_2$.

Conversely, if there are $q \geq s-r+1$ monomial relations $a
\leadsto c$ with $a \in \mathcal{S}$, then
$$\mu_{{\scriptsize \mbox{Ker}} f_1}(S_j) = \mu_{\textsf{Q}_1}(S_j) = s - q
 \geq r - 1.$$ Since $\mu_{\textsf{Q}_2}(S_j) = r$, it follows that $$ r =
\mu_{\textsf{Q}_2}(S_j) = \mu_{{\scriptsize \mbox{Ker}} f_1}(S_j)
+ \mu_{{\scriptsize \mbox{Ker}} f_2}(S_j) \leq r - 1 +
\mu_{{\scriptsize \mbox{Ker}} f_2}(S_j).$$  We get that, $1 \leq
\mu_{{\scriptsize \mbox{Ker}} f_2}(S_j) \leq
\mu_{\textsf{Q}_3}(S_j)$.

Since at least one of the (or monomial or commutative) relation
starting at the vertex $a \in \mathrm{S}$  and ending in the
vertex $j$ is a minimal relation, we have that $S_j \in \mbox{Top
Ker}\: f_2$. Indeed, if the minimal relation $\rho: a \leadsto b
\leadsto h \longrightarrow c $ is monomial, then $\mu_{P_a}(S_h) =
1$, $\mu_{P_a}(S_j) = 0$, $\mu_{P_b}(S_h) = 1$, $\mu_{P_b}(S_j) =
1$. Now, if the minimal relation is a commutative relation

  \begin{picture}(80,90)
      \put(130,25){
      \put(-5,20){\HVCenter{\footnotesize $a$}}
      \put(45,62){\HVCenter{\footnotesize $b_1$}}
      \put(45,-22){\HVCenter{\footnotesize $b_2$}}
      \put(90,50){\HVCenter{\footnotesize $h_1$}}
      \put(90,-5){\HVCenter{\footnotesize $h_2$}}
      \put(110,20){\HVCenter{\footnotesize $j$}}
            \qbezier(1,23)(20,50)(39,60)
      \put(20,45.7){\vector(1,1){0.1}}
            \qbezier(1,17)(20,-10)(39,-20)
      \put(20,-5.7){\vector(1,-1){0.1}}
            \qbezier(50,60)(70,60)(84,52)
      \put(70,57.5){\vector(3,-1){0.1}}
             \qbezier(50,-22)(70,-20)(84,-12)
      \put(70,-17.8){\vector(3,1){0.1}}

      \put(95,45){\vector(2,-3){13}}
      \put(95,-2){\vector(2,3){10}}

      \put(55,26){\HVCenter{\footnotesize $\rho$}}
             \qbezier[15](10,20)(55,20)(100,20)
      }
  \end{picture}

\noindent  we have that  $\mu_{P_a}(S_{h_1}) = 1 =
\mu_{P_a}(S_{h_2}) $, $\mu_{P_a}(S_j) = 1$,
$\mu_{P_{b_1}}(S_{h_1}) = 1 = \mu_{P_{b_2}}(S_{h_2}) = 1 $,
$\mu_{P_{b_1}}(S_j) = 1 = \mu_{P_{b_2}}$. In both cases, it is
clear that  $S_j$ must belong to  $\mbox{Top Ker}\: f_2$. Then
$P_j$ is a direct summand of $\textsf{Q}_3$. \end{proof}

Note that, without loss of generality, we can take $s \geq r$.
Otherwise, the opposite algebra satisfies the desired property.

Our purpose now is to study the projective dimensions of the
simple modules. To do this, we need the following technical
lemmas.

\begin{lem}\label{lem:afirm 1}
Let $A = kQ / I $  be a strongly simply connected schurian
algebra. Let

  \begin{picture}(80,40)
      \put(120,18){
      \put(-82,-1){\HVCenter{\footnotesize $0$}}
      \put(-77,-2){\vector(1,0){32}}
      \put(-35,0){\HVCenter{\small $\textsf{Q}_{n}$}}
      \put(-27,-2){\vector(1,0){32}}
      \put(-13,6){\HVCenter{\footnotesize $f_{n}$}}
                   \qbezier[3](10,-2)(15,-2)(20,-2)
      \put(25,-2){\vector(1,0){32}}
      \put(40,6){\HVCenter{\footnotesize $f_{2}$}}
      \put(68,0){\HVCenter{\small $\textsf{Q}_1$}}
      \put(78,-2){\vector(1,0){32}}
      \put(93,6){\HVCenter{\footnotesize $f_1$}}
      \put(122,0){\HVCenter{\small $\textsf{Q}_0$}}
      \put(133,-2){\vector(1,0){32}}
      \put(147,6){\HVCenter{\footnotesize $f_0$}}
      \put(175,0){\HVCenter{\footnotesize $S_i$}}
      \put(186,-2){\vector(1,0){32}}
      \put(225,-2){\HVCenter{\footnotesize $0$}}
     }
  \end{picture}

\noindent be the minimal projective resolution of simple
$A$-module $S_i$. Then  $S_i$ is not a composition factor of the
$A$-modules $\mathrm{rad}$$\: \textsf{Q}_0, \textsf{Q}_1, \ldots ,
\textsf{Q}_n$.
\end{lem}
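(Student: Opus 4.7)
My plan is to exploit the triangularity of $A$ (which follows from strong simple connectedness, since $Q$ has no oriented cycles) together with Remark~\ref{rem:rp1}(3). The guiding observation is: if $P_d$ is any indecomposable summand of some $\textsf{Q}_k$, then by Remark~\ref{rem:rp1}(3) there is a path from $i$ to $d$ in $Q$; on the other hand, $S_i$ can be a composition factor of $P_d$ only if there is a nonzero path from $d$ to $i$ in $A$. Since $Q$ has no oriented cycles, both conditions force $d=i$ and both paths to be trivial, so $S_i$ can be a composition factor of a summand of $\textsf{Q}_k$ only through the top copy of $S_i$ in a summand equal to $P_i$.

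With this in hand, the plan reduces to two sub-steps. First, I would dispose of the case $k=0$ directly: $\textsf{Q}_0=P_i$, and if $S_i$ were a composition factor of $\mathrm{rad}\, P_i$, there would be a nonzero path from $i$ to $i$ of positive length, contradicting triangularity. Second, I would show by induction on $k\geq 1$ that $P_i$ is not a direct summand of $\textsf{Q}_k$; by the previous paragraph, this immediately implies that $S_i$ is not a composition factor of $\textsf{Q}_k$ (and hence not of $\mathrm{rad}\,\textsf{Q}_k$ either).

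For the inductive step, suppose $P_i$ were a summand of $\textsf{Q}_k$ with $k\geq 1$. By the minimality of the resolution, $\textsf{Q}_k$ is the projective cover of $\mathrm{Ker}\, f_{k-1}\subseteq \mathrm{rad}\,\textsf{Q}_{k-1}$, so $S_i$ would have to lie in $\mathrm{Top}\,\mathrm{Ker}\, f_{k-1}$ and thus be a composition factor of $\mathrm{rad}\,\textsf{Q}_{k-1}$. For $k=1$, this contradicts the base case. For $k\geq 2$, the inductive hypothesis says $S_i$ is not a composition factor of $\textsf{Q}_{k-1}$, again a contradiction. This closes the induction and completes the argument.

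I do not anticipate a serious obstacle; the only point requiring a little care is aligning Remark~\ref{rem:rp1}(2)--(3) with the inductive book-keeping, so that the statement ``$S_i\in\mathrm{Top}\,\mathrm{Ker}\, f_{k-1}$ implies $S_i$ is a composition factor of $\mathrm{rad}\,\textsf{Q}_{k-1}$'' is invoked correctly (using that $\mathrm{Ker}\, f_{k-1}\subseteq \mathrm{rad}\,\textsf{Q}_{k-1}$ by minimality). Everything else is immediate from the fact that $Q$ is acyclic.
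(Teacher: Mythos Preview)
Your argument is correct and follows essentially the same route as the paper: both exploit the triangularity of $A$ to derive an oriented cycle from the assumption that $S_i$ occurs as a composition factor in some $\textsf{Q}_k$. The only cosmetic difference is that you invoke Remark~\ref{rem:rp1}(3) directly to obtain a path $i \leadsto d$ for every summand $P_d$ of $\textsf{Q}_k$, whereas the paper re-derives this inline by tracing back through $\mathrm{Top}\,\textsf{Q}_k, \mathrm{Top}\,\textsf{Q}_{k-1}, \ldots, \mathrm{Top}\,\textsf{Q}_1$ and concatenating the resulting paths into a cycle.
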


\begin{proof} If $S_i \in \mbox{rad}\ \textsf{Q}_0 =
\mbox{Ker}\  f_0$, then there exists a path of length at least one
from $i$ to $i$ in $Q$, which is a contradiction since $A$ is
triangular.

Suppose now that  $S_i$ appears as a composition factor in some
term $\textsf{Q}_j$, with $j \geq 1$. Let $0 \neq k = \mbox{min}
\{ l \in \{1, \ldots,n\} \mbox{ such that } \mu_{P_l}(S_i) \neq 0
\}$. Then $\mu_{{\scriptsize \mbox{Ker}} f_{k-1} }(S_i) = 0$ and
there is $S_k$ in $\mbox{Top } \textsf{Q}_k = \mbox{Top Ker }
f_{k-1}$, such that:

\begin{itemize}
\item[(k)] there is a nonzero path from $k$ to $i$
\item[(k-1)] there is  $S_{k-1} \in \mbox{Top } \textsf{Q}_{k-1} = \mbox{Top
Ker } f_{k-2}$ such that there is a nonzero path from  $(k-1)$ to
$k$
\end{itemize}

Consider $0 \leq h \leq k-1$:

\begin{itemize}
\item[(k-h)] there is $S_{k-h} \in \mbox{Top } \textsf{Q}_{k-h} = \mbox{Top
Ker } f_{k-h-1}$ such that there is a non-zero path $w_h$ from
${k-h}$ to ${k-h+1}$
\end{itemize}

We obtain  $S_1 \in \mbox{Top Ker } f_0 = \mbox{Top rad }
\textsf{Q}_0$, and therefore there is an arrow from $i$ to $1$.
Consequently,  $$ i \longrightarrow 1 \leadsto \cdots \leadsto
(k-h)\leadsto (k-1) \leadsto k \leadsto i$$ is a cycle in $Q$,
which is a contradiction since $A$ is triangular.
\end{proof}

\begin{lem}\label{lem:afirm 2}
Let $A = kQ / I $ a strongly simply connected schurian algebra
with $\mathrm{gl.dim.}$$\: A = n$. Consider $J = \{h \in Q_0 :
pd_A S_h = n \}$. Then there exists $j \in J$ such that $S_h$
 do not appear in the  minimal projective resolution of
$S_j$, with $h \in J \backslash \{j\}$.
\end{lem}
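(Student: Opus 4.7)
The plan is to exploit the triangular structure of $A$ in order to choose an element of $J$ that is ``terminal'' with respect to paths in $Q$. Concretely, define a relation on $Q_0$ by $h \preceq j$ iff there exists a path in $Q$ (not necessarily nonzero in $A$) from $j$ to $h$. Because $A$ is triangular, $Q$ has no oriented cycles, so $\preceq$ is antisymmetric and transitive, hence a partial order on $Q_0$, and thus on the finite subset $J$, which is nonempty because $\mathrm{gl.dim.}\:A = n$. I would then pick $j \in J$ to be a minimal element of $(J, \preceq)$; that is, a $j \in J$ such that no $h \in J \setminus \{j\}$ is reachable from $j$ by a path in $Q$.

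Next I would verify that this choice of $j$ has the required property. Consider the minimal projective resolution of $S_j$. By Remark \ref{rem:rp1}(3), every indecomposable projective $P_d$ occurring as a direct summand of any term $\textsf{Q}_r$ satisfies: there is a path from $j$ to $d$ in $Q$. Suppose, for contradiction, that some $S_h$ with $h \in J \setminus \{j\}$ appears as a composition factor of some $\textsf{Q}_r$. Then $S_h$ is a composition factor of some direct summand $P_d$ of $\textsf{Q}_r$. Using the explicit description of indecomposable projectives over a strongly simply connected schurian algebra recalled at the end of Section 2, $S_h$ being a composition factor of $P_d$ means $P_d(h) \neq 0$, hence there is a nonzero path in $A$ from $d$ to $h$, and in particular a path in $Q$ from $d$ to $h$. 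Concatenating with the path $j \leadsto d$ produces a path $j \leadsto h$ in $Q$, so $h \preceq j$ with $h \in J$ and $h \neq j$, contradicting the minimality of $j$ in $(J, \preceq)$. This shows that no $S_h$ with $h \in J \setminus \{j\}$ appears in the minimal projective resolution of $S_j$.

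I do not anticipate a serious obstacle here; the only conceptual subtleties are two. First, one must take the partial order to be the one induced by paths in the underlying quiver $Q$ rather than by nonzero paths in $A$, so that Remark \ref{rem:rp1}(3) applies directly and the composition of paths in the argument above is unambiguous. Second, one must recall that the composition factors of an indecomposable projective $P_d$ are precisely the $S_h$ for which there is a nonzero path from $d$ to $h$ in $A$, which in turn requires the existence of a path from $d$ to $h$ in $Q$; this is exactly what makes the path $j \leadsto d \leadsto h$ in $Q$ valid and hence allows us to conclude by the triangularity of $A$.
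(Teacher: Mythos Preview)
Your proof is correct and follows essentially the same approach as the paper: both arguments rest on the fact (via Remark~\ref{rem:rp1}(3) and the description of projectives) that if $S_h$ occurs in the minimal projective resolution of $S_j$ then there is a path $j \leadsto h$ in $Q$, together with the triangularity of $A$. The only difference is presentational: the paper argues by contradiction, building a strictly descending chain $j \leadsto j_1 \leadsto j_2 \leadsto \cdots$ inside the finite set $J$ until a cycle is forced, whereas you cut straight to the point by choosing $j$ minimal in $(J,\preceq)$.
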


\begin{proof} Suppose that, for all $j \in J$,
there is   $h \in J$ such that $S_h$ appears as a composition
factor in the minimal projective resolution of $S_j$. Let
$\textsf{Q}_k$ be the first term of the minimal projective
resolution of  $S_j$ in which $S_h$  appears as composition
factor, then $\mu_{{\scriptsize \mbox{Ker}} f_{k-1} }(S_h) = 0$.
An analogous argument to the one used in Lemma \ref{lem:afirm 1},
shows that there is a path  $j \leadsto h$ and also $h \neq j$.

Next, we will renumber the elements of  $J$. Let  $j \in J$ and
$j_1 \in J \backslash \{j\}$ such that $S_{j_1}$ appears in the
minimal projective resolution of  $S_j$. Now we consider $j_2 \in
J \backslash \{j_1\}$ such that $S_{j_2}$ appears in the minimal
projective resolution of  $S_{j_1}$. If $j_2 = j$ we have a path
in $Q$. It follows that  $j_2 \neq j$ and there is $j_3 \in J
\backslash \{j, j_1, j_2\}$ such that $S_{j_3}$ appears in the
minimal projective resolution of $S_{j_2}$. Therefore,
$$J = \{ j, j_1, \ldots, j_{m-1} \}$$ and there exists a sequence of paths in  $Q$:
$$ j \leadsto j_1 \leadsto j_2 \leadsto \cdots \leadsto j_{m-1}$$

Since $S_h$ appears in  the minimal projective resolution of
$S_{j_{m-1}}$ , for some $h \in J$, we get that  either $h=j$, or
$h=j_l$, with $1 \leq l\leq m-1$, are a contradiction with $A$
triangular.

Consequently, there is a $j \in J$ such that any $S_h,  h \in J
\backslash \{j\}$, does not appear as composition factor in the
minimal projective resolution of  $S_j$. \end{proof}

\begin{prop}\label{prop:dpM 2}
Let $A$ be an algebra with $\mathrm{gl.dim.}$$\:  A = n$ and let
$M$ be an $A$-module. If $pd\: M = n $, then there is a
composition factor $S$ of $M$, such that $pd\: S = n$.
\end{prop}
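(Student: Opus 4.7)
The plan is to induct on the composition length $\ell(M)$ of $M$. The base case $\ell(M)=1$ is immediate, for then $M$ is itself simple and equal to its unique composition factor, which has projective dimension $n$ by hypothesis.

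For the inductive step, assume the statement holds for every $A$-module of length strictly less than $\ell(M)$, and choose any simple submodule $S \subseteq M$ (such an $S$ exists since $M$ has finite length). This gives a short exact sequence
$$0 \longrightarrow S \longrightarrow M \longrightarrow M/S \longrightarrow 0.$$
Applying $\operatorname{Hom}_A(-,N)$ for an arbitrary $A$-module $N$ and looking at the resulting long exact sequence of $\operatorname{Ext}$-groups yields the classical inequality
$$pd\: M \;\leq\; \max\bigl(pd\: S,\; pd\: M/S\bigr).$$
Because $\operatorname{gl.dim.} A = n$, neither $pd\: S$ nor $pd\: M/S$ can exceed $n$; combined with $pd\: M = n$, the inequality forces at least one of these two projective dimensions to equal $n$.

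If $pd\: S = n$, then $S$ is the required simple composition factor of $M$. Otherwise $pd\: M/S = n$, and since $\ell(M/S) = \ell(M)-1$, the inductive hypothesis produces a composition factor $S'$ of $M/S$ with $pd\: S' = n$. As every composition factor of $M/S$ is also a composition factor of $M$, this $S'$ is the desired simple module, completing the induction. There is no real obstacle here: the only nontrivial input is the standard $\operatorname{Ext}$-sequence inequality for projective dimensions in a short exact sequence, and no hypothesis beyond $\operatorname{gl.dim.} A = n$ is actually needed for the argument.
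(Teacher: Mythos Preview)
Your proof is correct and follows essentially the same strategy as the paper's: induct on a length-type invariant of $M$ and use the inequality $pd\: M \leq \max(pd\: X,\, pd\: Y)$ coming from a short exact sequence $0 \to X \to M \to Y \to 0$. The only cosmetic difference is that the paper inducts on the Loewy length and uses the sequence $0 \to \operatorname{rad} M \to M \to \operatorname{Top} M \to 0$, whereas you induct on composition length and peel off a simple submodule; both choices work equally well.
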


\begin{proof} The proof follows by induction on the Loewy length $l_w(M)$ of $M$.

If $l_w(M) = 1$ then $M$ is semisimple, and it follows that one of
its direct summands  has projective dimension $n$. Suppose now
that the result holds for  $l_w(M) < m$. Consider the exact
sequence
  \begin{center}
  $\begin{array}{ccccccccc}
  0 & \rightarrow & \mbox{rad}\  M & \rightarrow & M & \rightarrow & \mbox{Top}\  M & \rightarrow & 0 \
  \end{array}$
  \end{center}
 then we have that $l_w(\mathrm{rad}\  M) = m-1$ and $n = pd\:  M \leq
\mbox{sup}\{pd\:  \mbox{rad}\:  M, pd\:  \mbox{Top}\  M \}$.

Since $\mbox{gl.dim.}\:  A = n$, then $\mbox{sup}\{pd\;
\mbox{rad}\; M, \; pd\;  \mbox{Top}\;  M \} \leq n$. Therefore $n
=~pd\: M = \mbox{sup} \{ pd\:  \mbox{rad}\:  M, pd\: \mbox{Top}\:
M \}$.

If $pd\:  \mbox{Top}\:  M = n$, then  since the module
$\mbox{Top}\: M$ is semisimple, the result follows. Otherwise,
$pd\: \mbox{rad}\: M = n$,  and by inductive hypothesis there is
$S$ composition factor $\mbox{rad}\:  M$ such that $pd\: S = n$.
Since $S$ is also a composition factor of $M$, the result follows.
\end{proof}
Now, we are able to state the following result.

\begin{thm}\label{thm:reduccion de la dim gl}
Let $A = kQ / I $ be a  strongly simply connected schurian algebra
with $\mathrm{gl.dim. }$$\:  A = n$.  Then, for all $0 \leq m \leq
n$, there is a simple $A$-module $S$ such that $pd_A\ S = m$.
\end{thm}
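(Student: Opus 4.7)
The plan is to exhibit, for each $0 \leq m \leq n$, a simple $A$-module of projective dimension exactly $m$. The case $m=0$ is handled immediately by taking a sink of $Q$: the associated simple module coincides with its indecomposable projective cover. The case $m=n$ is the content of Proposition~\ref{prop:dpM 2} applied to the regular representation. The substantive range is therefore $1 \leq m < n$.

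Two elementary strengthenings of results already proved in the paper would be used. First, Proposition~\ref{prop:dpM 2} generalizes to arbitrary modules: any $A$-module $M$ has a composition factor $S$ with $pd_A S \geq pd_A M$. This is a consequence of the inequality $pd_A M \leq \max_i pd_A S_i$ over the composition factors $S_i$ of $M$, which follows by induction on the Loewy length from the short exact sequence $0 \to \mathrm{rad}\, M \to M \to \mathrm{Top}\, M \to 0$. Second, Lemma~\ref{lem:afirm 2} extends verbatim to any nonempty subset $J \subseteq (Q_A)_0$: there exists $j \in J$ such that no $S_h$ with $h \in J \setminus \{j\}$ appears as a composition factor of any term of the minimal projective resolution of $S_j$. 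The proof of Lemma~\ref{lem:afirm 2} uses only triangularity of $A$ together with the fact that a composition factor $S_h$ appearing in the resolution of $S_j$ forces a path $j \leadsto h$ in $Q$; neither ingredient refers to the specific value of $pd$.

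Fix $m$ with $1 \leq m \leq n$, and set $J_m := \{\, h \in (Q_A)_0 \,:\, pd_A S_h \geq m \,\}$. This set is nonempty by Proposition~\ref{prop:dpM 2} together with $n \geq m$. Apply the strengthened Lemma~\ref{lem:afirm 2} to $J_m$ to pick $j \in J_m$ such that no $S_h$ with $h \in J_m \setminus \{j\}$ appears as a composition factor of the minimal projective resolution of $S_j$. The claim to verify is that $pd_A S_j = m$.

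Suppose toward a contradiction that $pd_A S_j > m$ and set $k := pd_A S_j - m \geq 1$. The $k$-th syzygy $\Omega^k S_j$ is a nonzero quotient of $\textsf{Q}_k$, namely the image of $f_k$, so every composition factor of $\Omega^k S_j$ is a composition factor of $\textsf{Q}_k$. By the choice of $j$, no $S_h$ with $h \in J_m \setminus \{j\}$ appears in $\textsf{Q}_k$, and by Lemma~\ref{lem:afirm 1}, $S_j$ does not appear in $\textsf{Q}_k$ either (since $k \geq 1$). Hence every composition factor of $\Omega^k S_j$ has projective dimension strictly less than $m$. But $pd_A \Omega^k S_j = pd_A S_j - k = m$, so the strengthened Proposition~\ref{prop:dpM 2} produces a composition factor of projective dimension at least $m$, a contradiction. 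Therefore $pd_A S_j = m$. The main subtlety will lie in spotting both strengthenings; once these are in hand the rest is a direct syzygy count, whose key observation is that $\Omega^k S_j$ is a quotient of $\textsf{Q}_k$, which is what transports the composition-factor constraints from the resolution to the syzygy.
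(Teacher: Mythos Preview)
Your argument is correct. It is also genuinely different from the paper's own proof, and in fact cleaner.

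The paper argues by descent: starting from $J=\{h:pd_A S_h=n\}$, it picks $j\in J$ via Lemma~\ref{lem:afirm 2}, forms the quotient $B=A/\langle e_k:k\in J\rangle$, and shows two things: (a) the truncated resolution $0\to\textsf{Q}_n\to\cdots\to\textsf{Q}_1\to\rad\textsf{Q}_0\to 0$ lives entirely in $\mod B$, so $pd_B\,\rad\textsf{Q}_0=n-1$; and (b) for each $h\notin J$ one has $pd_B S_h\le pd_A S_h$, established by a fairly delicate comparison map between the resolutions over $A$ and over $B$. From (a) and (b) one gets $\gldim B=n-1$; Proposition~\ref{prop:dpM 2} then provides a simple $S_{j_1}$ with $pd_B S_{j_1}=n-1$, and (b) forces $pd_A S_{j_1}=n-1$. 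One then iterates with $J_i=\{h:pd_A S_h\ge n-i\}$.

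Your route bypasses the quotient algebras entirely. By extending Lemma~\ref{lem:afirm 2} to an arbitrary nonempty $J\subseteq Q_0$ (its proof uses only triangularity), and Proposition~\ref{prop:dpM 2} to the inequality $pd_A M\le\max\{pd_A S: S\text{ a composition factor of }M\}$, you pick $j\in J_m=\{h:pd_A S_h\ge m\}$ and directly show $pd_A S_j=m$ via the single observation that every composition factor of $\Omega^k S_j$ is a composition factor of $\textsf{Q}_k$, hence (by the choice of $j$ and Lemma~\ref{lem:afirm 1}) has projective dimension $<m$, contradicting $pd_A\Omega^k S_j=m$. This replaces the quotient-algebra and resolution-comparison machinery by a short syzygy count. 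What the paper's approach buys in exchange is the auxiliary statement that each $B_i$ has global dimension exactly $n-(i+1)$, which may be of independent interest; for the theorem as stated, your argument is the more economical one.
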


\begin{proof} We consider again the set
$J = \{h \in Q_0 : pd_A S_h = n \}$. As  $n = \mbox{gl.dim.}\: A =
\mbox{sup}\{ pd_A S_i : i \in Q_0\}$, it follows that $J \neq
\emptyset$.

Since $A$ is triangular, it follows that the algebra $B = A /
\langle  \{ e_k : k \in J \} \rangle$ is also triangular, and we
have the functorial immersion  $\mbox{mod }B \hookrightarrow
\mbox{mod }A$.

By Lemma  \ref{lem:afirm 2}, there is $j \in J$ such that if

  \begin{picture}(80,42)
      \put(110,35){
      \put(-82,-1){\HVCenter{\footnotesize $0$}}
      \put(-77,-2){\vector(1,0){32}}
      \put(-35,0){\HVCenter{\small $\textsf{Q}_{n}$}}
      \put(-27,-2){\vector(1,0){32}}
                   \qbezier[3](10,-2)(15,-2)(20,-2)
      \put(25,-2){\vector(1,0){32}}
      \put(68,0){\HVCenter{\small $\textsf{Q}_1$}}
      \put(78,-2){\vector(1,0){32}}
      \put(122,0){\HVCenter{\small $\textsf{Q}_0$}}
      \put(133,-2){\vector(1,0){32}}
      \put(175,0){\HVCenter{\footnotesize $S_j$}}
      \put(186,-2){\vector(1,0){32}}
      \put(225,-2){\HVCenter{\footnotesize $0$}}

      \put(95,-30){\HVCenter{\footnotesize $\mbox{rad}\:$\small$\textsf{Q}_0$}}
      \put(70,-9){\vector(1,-2){9}}
      \put(105,-23){\vector(1,2){7}}
     }
  \end{picture}

\noindent is the minimal projective resolution of $S_j$, then
$\mu_{\bigoplus^n_{t=0} \textsf{Q}_t} (S_k) = 0$, for all $k \in J
\backslash \{j\}$. Hence $ 0 \longrightarrow  \textsf{Q}_n
\longrightarrow \cdots \longrightarrow \textsf{Q}_1
\longrightarrow \mbox{rad } \textsf{Q}_0 \longrightarrow 0$ is the
projective resolution of the module $\mbox{rad } \textsf{Q}_0$ in
$A$ and also in $B$, because any  $S_k$, $k \in J $, does not
appear as a composition factor of the projective modules
$\textsf{Q}_i, 1\leq i \leq n$. Therefore, $ pd_A \mbox{ rad }
\textsf{Q}_0 = pd_B \mbox{ rad } \textsf{Q}_0 = n-1$.

Let $h \notin J$, the simple $B$-module $S_h$ is also a simple
$A$-module  with  $ s = pd_A \: S_h \leq n-1$.

Consider the projective resolutions of $S_h$ in $\mbox{mod} \: A$
and $\mbox{mod} \: B$, respectively.

  \begin{picture}(80,87)
      \put(130,37){
      \put(-113,39){\HVCenter{\footnotesize $0$}}
      \put(-102,38){\vector(1,0){24}}
      \put(-69,40){\HVCenter{\small $\textsf{Q}^A_s$}}
      \put(-61,38){\vector(1,0){24}}
      \put(-22,40){\HVCenter{\small $\textsf{Q}^A_{s-1}$}}
      \put(-10,38){\vector(1,0){24}}
                   \qbezier[2](21,38)(25,38)(29,38)
      \put(34,38){\vector(1,0){24}}
      \put(68,40){\HVCenter{\small $\textsf{Q}^A_1$}}
      \put(78,38){\vector(1,0){32}}
      \put(122,40){\HVCenter{\small $\textsf{Q}^A_0$}}
      \put(133,38){\vector(1,0){32}}
      \put(150,46){\HVCenter{\footnotesize $f^A_0$}}
      \put(175,40){\HVCenter{\footnotesize $S_h$}}
      \put(185,38){\vector(1,0){24}}
      \put(215,39){\HVCenter{\footnotesize $0$}}

      \put(175,32){\vector(0,-1){25}}
      \put(184,20){\HVCenter{\footnotesize $Id$}}

      \put(-113,-1){\HVCenter{\small $\textsf{Q}^B_{s+1}$}}
      \put(-102,-2){\vector(1,0){24}}
      \put(-69,0){\HVCenter{\small $\textsf{Q}^B_s$}}
      \put(-61,-2){\vector(1,0){24}}
      \put(-22,0){\HVCenter{\small $\textsf{Q}^B_{s-1}$}}
      \put(-10,-2){\vector(1,0){24}}
                   \qbezier[2](21,-2)(25,-2)(29,-2)
      \put(34,-2){\vector(1,0){24}}
      \put(68,0){\HVCenter{\small $\textsf{Q}^B_1$}}
      \put(78,-2){\vector(1,0){32}}
      \put(122,0){\HVCenter{\small $\textsf{Q}^B_0$}}
      \put(133,-2){\vector(1,0){32}}
      \put(150,6){\HVCenter{\footnotesize $f^B_0$}}
      \put(175,0){\HVCenter{\footnotesize $S_h$}}
      \put(185,-2){\vector(1,0){24}}
      \put(215,-2){\HVCenter{\footnotesize $0$}}

      \put(95,-30){\HVCenter{\footnotesize $\mbox{rad}\:$\small $\textsf{Q} P^B_0$}}
      \put(70,-9){\vector(1,-2){9}}
      \put(105,-23){\vector(1,2){7}}

     }
  \end{picture}

Since $\textsf{Q}^B_0$ and $S_h$ are $B$-module of finite type,
$f^B_0$ is a epimorphism and $\mbox{Ker}\: f^B_0 = \mbox{rad} \:
\textsf{Q}^B_0$, then $f^B_0$ is an essential epimorphism in $B$.
Because  $\mbox{rad}_A \: \textsf{Q}^B_0 = \mbox{rad}_B \:
\textsf{Q}^B_0$, and the functorial immersion of the corresponding
category modules, it follows that $f^B_0$ is an essential
epimorphism in $A$. In analogous way, if we consider the
epimorphism $\varphi^B_i : \textsf{Q}^B_i \longrightarrow
\mbox{Ker}\: f^B_{i-1}$ in $\mbox{mod} \: B$, as  $\mbox{Ker}\:
\varphi^B_i \subset \mbox{rad}\: \textsf{Q}^B_i$, then
$\varphi^B_i$ is essential in  $B$ and therefore also on $A$.

Moreover, as $\textsf{Q}^A_0$ is a projective $A$-module, then
there is a morphism of $A$-modules $h_0: \textsf{Q}^A_0
\longrightarrow \textsf{Q}^B_0$ such that $f^B_0 h_0 = f^A _0$,
i.e., the following diagram is commutative.

  \begin{picture}(80,70)
      \put(15,10){

      \put(175,60){\HVCenter{\small $\textsf{Q}^A_0$}}

      \put(175,50){\vector(0,-1){40}}
      \put(183,30){\HVCenter{\footnotesize $f^A_0$}}

      \put(122,0){\HVCenter{\small $\textsf{Q}^B_0$}}
      \put(133,-2){\vector(1,0){32}}
      \put(150,6){\HVCenter{\footnotesize $f^B_0$}}
      \put(175,0){\HVCenter{\footnotesize $S_h$}}
      \put(186,-2){\vector(1,0){32}}
      \put(225,-2){\HVCenter{\footnotesize $0$}}

                   \qbezier[16](164,52)(147,31)(130,10)
      \put(130,10){\vector(-1,-1){.1}}
      \put(140,35){\HVCenter{\footnotesize $h_0$}}
     }
  \end{picture}

Since $f^B_0$ is essential in $A$ and $f^B_0 h_0 = f^A _0$ is a
epimorphism, it follows that  $h_0$ is a epimorphism. Then there
is a morphism of $A$-module $\widetilde{h_0}: \mbox{Ker} \: f^A_0
\longrightarrow \mbox{Ker} \: f^B_0$  such that the following
diagram with exact rows is commutative:

  \begin{picture}(80,60)
      \put(75,10){
      \put(0,39){\HVCenter{\footnotesize $0$}}
      \put(8,38){\vector(1,0){32}}
      \put(60,40){\HVCenter{\footnotesize $\mbox{Ker}\: f^A_0$}}
      \put(78,38){\vector(1,0){32}}
      \put(97,46){\HVCenter{\footnotesize $\psi^A$}}
      \put(122,40){\HVCenter{\small $\textsf{Q}^A_0$}}
      \put(133,38){\vector(1,0){32}}
      \put(150,46){\HVCenter{\footnotesize $f^A_0$}}
      \put(175,40){\HVCenter{\footnotesize $S_h$}}
      \put(186,38){\vector(1,0){32}}
      \put(225,39){\HVCenter{\footnotesize $0$}}

      \put(60,32){\vector(0,-1){25}}
      \put(53,20){\HVCenter{\footnotesize $\widetilde{h_0}$}}
      \put(122,32){\vector(0,-1){25}}
      \put(115,20){\HVCenter{\footnotesize $h_0$}}
      \put(175,32){\vector(0,-1){25}}
      \put(168,20){\HVCenter{\footnotesize $Id$}}

      \put(0,-1){\HVCenter{\footnotesize $0$}}
      \put(8,-2){\vector(1,0){32}}
      \put(60,0){\HVCenter{\footnotesize $\mbox{Ker}\: f^B_0$}}
      \put(78,-2){\vector(1,0){32}}
      \put(97,6){\HVCenter{\footnotesize $\psi^B$}}
      \put(122,0){\HVCenter{\small $\textsf{Q}^B_0$}}
      \put(133,-2){\vector(1,0){32}}
      \put(150,6){\HVCenter{\footnotesize $f^B_0$}}
      \put(175,0){\HVCenter{\footnotesize $S_h$}}
      \put(186,-2){\vector(1,0){32}}
      \put(225,-2){\HVCenter{\footnotesize $0$}}
     }
  \end{picture}

\noindent  Furthermore, it is clear that   $\widetilde{h_0}$ is an
epimorphism.

Consider the diagram

  \begin{picture}(80,170)
      \put(130,130){
      \put(0,40){\HVCenter{\small $\textsf{Q}^A_1$}}
      \put(120,40){\HVCenter{\small $\textsf{Q}^A_0$}}
      \put(60,0){\HVCenter{\footnotesize $\mbox{Ker}\: f^A_0$}}
      \put(10,38){\vector(1,0){100}}
      \put(60,30){\HVCenter{\footnotesize $f^A_1$}}
      \put(10,30){\vector(4,-3){32}}
      \put(17,15){\HVCenter{\footnotesize $\varphi^A$}}
      \put(75,8){\vector(4,3){32}}
      \put(98,15){\HVCenter{\footnotesize $\psi^A$}}

      \put(0,-60){\HVCenter{\small $\textsf{Q}^B_1$}}
      \put(120,-60){\HVCenter{\small $\textsf{Q}^B_0$}}
      \put(60,-100){\HVCenter{\footnotesize $\mbox{Ker}\: f^B_0$}}
      \put(10,-62){\vector(1,0){100}}
      \put(45,-50){\HVCenter{\footnotesize $f^B_1$}}
      \put(10,-70){\vector(4,-3){32}}
      \put(17,-85){\HVCenter{\footnotesize $\varphi^B$}}
      \put(75,-92){\vector(4,3){32}}
      \put(98,-85){\HVCenter{\footnotesize $\psi^B$}}

      \put(90,-128){\HVCenter{\footnotesize $0$}}
      \put(70,-110){\vector(1,-1){15}}

      \put(120,30){\vector(0,-1){82}}
      \put(130,-10){\HVCenter{\footnotesize $h_0$}}
      \put(65,-10){\vector(0,-1){82}}
      \put(75,-25){\HVCenter{\footnotesize $\widetilde{h_0}$}}

             \qbezier[30](0,30)(0,-11)(0,-52)
      \put(0,-51.7){\vector(0,-1){.1}}
      \put(-10,-10){\HVCenter{\footnotesize $h_1$}}
      }
  \end{picture}

For the epimorphism  $\varphi^B : \textsf{Q}^B_1 \longrightarrow
\mbox{Ker}\: f^B_0$, there exists a morphism of $A$-modules $h_1 :
\textsf{Q}^A_1 \longrightarrow \textsf{Q}^B_1$ (since
$\textsf{Q}^A_1$ is projective module) such that $\varphi^B h_1 =
\widetilde{h_0} \varphi^A$ and $ f^B_1 h_1 = \psi^B \varphi^B h_1
= \psi^B \widetilde{h_0} \varphi^A = h_0 \psi^A \varphi^A = h_0
f^A_1$.

Repeating the arguments, we get the following commutative diagram
with exact rows:

  \begin{picture}(80,160)
      \put(49,105){
      \put(0,40){\HVCenter{\small $\textsf{Q}^A_s$}}
      \put(120,40){\HVCenter{\small $\textsf{Q}^A_{s-1}$}}
      \put(60,0){\HVCenter{\footnotesize $\mbox{Ker}\: f^A_{s-1}$}}
      \put(10,38){\vector(1,0){96}}
      \put(10,30){\vector(4,-3){32}}
      \put(75,8){\vector(4,3){32}}

      \put(0,-60){\HVCenter{\small $\textsf{Q}^B_s$}}
      \put(120,-60){\HVCenter{\small $\textsf{Q}^B_{s-1}$}}
      \put(60,-100){\HVCenter{\footnotesize $\mbox{Ker}\: f^B_{s-1}$}}
      \put(10,-62){\vector(1,0){96}}
      \put(45,-50){\HVCenter{\footnotesize $f^B_s$}}
      \put(10,-70){\vector(4,-3){32}}
      \put(17,-85){\HVCenter{\footnotesize $\varphi^B_s$}}
      \put(75,-92){\vector(4,3){32}}
      \put(98,-85){\HVCenter{\footnotesize $\psi^B_s$}}

      \put(120,30){\vector(0,-1){82}}
      \put(132,-10){\HVCenter{\footnotesize $h_{s-1}$}}
      \put(65,-10){\vector(0,-1){82}}
      \put(78,-28){\HVCenter{\footnotesize $\widetilde{h_{s-1}}$}}

      \put(0,30){\vector(0,-1){82}}
      \put(10,-10){\HVCenter{\footnotesize $h_s$}}

      \put(132,38){\vector(1,0){20}}
             \qbezier[2](158,38)(161,38)(164,38)
      \put(170,38){\vector(1,0){20}}
      \put(200,40){\HVCenter{\small $\textsf{Q}^A_1$}}
      \put(208,38){\vector(1,0){20}}
      \put(239,40){\HVCenter{\small $\textsf{Q}^A_0$}}
      \put(247,38){\vector(1,0){20}}
      \put(256,50){\HVCenter{\footnotesize $f^A_1$}}
      \put(275,40){\HVCenter{\footnotesize $S_h$}}
      \put(282,38){\vector(1,0){20}}
      \put(308,39){\HVCenter{\footnotesize $0$}}

      \put(132,-62){\vector(1,0){20}}
             \qbezier[2](158,-62)(161,-62)(164,-62)
      \put(170,-62){\vector(1,0){20}}
      \put(200,-60){\HVCenter{\small $\textsf{Q}^B_1$}}
      \put(208,-62){\vector(1,0){20}}
      \put(239,-60){\HVCenter{\small $\textsf{Q}^B_0$}}
      \put(247,-62){\vector(1,0){20}}
      \put(256,-50){\HVCenter{\footnotesize $f^B_1$}}
      \put(275,-60){\HVCenter{\footnotesize $S_h$}}
      \put(282,-62){\vector(1,0){20}}
      \put(308,-61){\HVCenter{\footnotesize $0$}}

      \put(198,30){\vector(0,-1){82}}
      \put(208,-10){\HVCenter{\footnotesize $h_1$}}
      \put(239,30){\vector(0,-1){82}}
      \put(246,-10){\HVCenter{\footnotesize $h_0$}}
      \put(275,30){\vector(0,-1){82}}
      \put(281,-10){\HVCenter{\footnotesize $Id$}}

      \put(-30,38){\vector(1,0){20}}
      \put(-35,39){\HVCenter{\footnotesize $0$}}
      \put(-27,-62){\vector(1,0){16}}
      \put(-37,-60){\HVCenter{\footnotesize $P^B_{s+1}$}}
      \put(-35,30){\vector(0,-1){82}}
      \put(-22,-10){\HVCenter{\footnotesize $h_{s+1}$}}
      }
  \end{picture}

\noindent  where  $h_i$ are epimorphisms. Therefore,
$\textsf{Q}^B_{s+1} = 0$ and  $pd_B \: S_h \leq pd_A \: S_h \leq
n-1$. Then the algebra $B$ has global dimension equal to $n-1$,
since $pd_B \: \mbox{rad}\: \textsf{Q}_0 = n-1$. From Proposition
\ref{prop:dpM 2}, it follows that there is $S_{j_1}$, a
composition factor of $\mbox{rad}\: \textsf{Q}_0$. such that $pd_B
\: S_{j_1} = n-1$. Consequently, $pd_A \: S_{j_1} = n-1$.

For $1\leq i \leq n-1$, consider  $J_i =\{ h \in Q_0 : pd_A \: S
\geq n-i \}$  and $B_i = A / \langle\{e_h : h \in J_i \}\rangle$.
A similar argument to the one used previously shows that
$\mbox{gl.dim.}\: B_i = n-(i+1)$. Therefore, there is a simple
$A$-module $S_{j_{i+1}}$ such that $pd_A \: S_{j_{i+1}} =
n-(i+1)$, and the proof is complete.
\end{proof}

Note that the dual  statements of Propositions \ref{prop:lemaA},
\ref{prop:lemaB}, \ref{prop:dpM 2}, Lemmas \ref{lem:afirm 1},
\ref{lem:afirm 2} and Theorem \ref{thm:reduccion de la dim gl}
hold.

\section{Critical algebras.}

In this Section we  introduce a new family of algebras of global
dimension three, the critical algebras. We characterize these
algebras by quivers with relations. Finally, we show that every
strongly simply  connected schurian algebra, having global
dimension at least three, must contain a critical algebra.

\begin{defn} Let  $B$ be an  algebra. We say that  $B$ is
\textit{critical} if it satisfies the following properties:

\begin{enumerate}
    \item[i)] $B$ has a unique source $i$ and a unique sink  $j$,
    \item[ii)] $pd\: S_i = id\: S_j = 3$ and if $S$ is a different simple $B$-module we have that $pd\:  S \leq 2 $ and $id\: S \leq
    2$,
    \item[iii)] Let $0 \rightarrow \textsf{Q}_3 \rightarrow \textsf{Q}_2
    \rightarrow \textsf{Q}_1 \rightarrow \textsf{Q}_0 \rightarrow S_i
    \rightarrow 0$ be the minimal projective resolution  of the simple $S_i$ and let $\textsf{Q} = \oplus^{3}_{k=0} \textsf{Q}_k$.
    Then all  indecomposable projective $B$-modules are in $\mathrm{add}$$\: \textsf{Q}$, and each  indecomposable projective $B$-module
    is a direct summand of exactly one $\textsf{Q}_k$,
    \item[iv)] Let $0 \rightarrow S_j \rightarrow I_0 \rightarrow I_1 \rightarrow I_2 \rightarrow I_3
    \rightarrow 0$ be the minimal co-injective resolution  of simple  $S_j$ and let $I = \oplus^{3}_{k=0} I_k$. Then all
    indecomposable injective $B$-modules are in $\mathrm{add}$$\: I$, and each indecomposable injective $B$-module
    is a direct summand of exactly one $I_k$.

    \item[v)] $B$ does not contain any proper full subcategory that verifies i), ii), iii) and iv).
\end{enumerate}
\end{defn}

Note that a critical algebra $B$ has global dimension three. In
addition, $B$ is minimal in the following sense: $B$ does not
contain any proper full subcategory $B$ whose global dimension is
three.

\begin{thm}\label{thm:lema 7}
Let $A = kQ / I $ be a strongly simply connected schurian algebra.
Let $i, j$ be vertices of $Q$ and let

\begin{picture}(80,25)
      \put(130,10){
      \put(-92,-2){\HVCenter{\footnotesize $\cdots$}}
      \put(-82,-2){\vector(1,0){32}}
      \put(-38,0){\HVCenter{\small $\textsf{Q}_3$}}
      \put(-30,-2){\vector(1,0){32}}
      \put(12,0){\HVCenter{\small $\textsf{Q}_2$}}
      \put(22,-2){\vector(1,0){32}}
       \put(65,0){\HVCenter{\small $\textsf{Q}_1$}}
      \put(74,-2){\vector(1,0){32}}
       \put(116,0){\HVCenter{\footnotesize $P_i$}}
      \put(125,-2){\vector(1,0){32}}
       \put(165,0){\HVCenter{\footnotesize $S_i$}}
      \put(172,-2){\vector(1,0){32}}
      \put(210,-2){\HVCenter{\footnotesize $0$}}
     }
  \end{picture}

\noindent be the minimal projective resolution of  $S_i$ in $A$,
and let $P_j$ be a direct summand of  $\textsf{Q}_3$. If $C =
\mathrm{Conv}$$(i,j)$, then there exists a projective $C$-module
 $P$ such that $\Gamma = \mathrm{End}$$_C \: P$ is
critical.
\end{thm}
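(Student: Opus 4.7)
The plan is to work entirely inside $C$, which is itself strongly simply connected schurian, and to extract from it a full subcategory whose endomorphism ring on projectives is critical. By Lemma \ref{lem:pozo-fuente} the hypothesis transfers from $A$ to $C$: we have $pd_C\, S_i = 3$ and $P_j^C$ occurs as an indecomposable summand of $\textsf{Q}_3^C$ in the minimal projective resolution
\[
0 \to \textsf{Q}_3^C \to \textsf{Q}_2^C \to \textsf{Q}_1^C \to P_i^C \to S_i \to 0
\]
over $C$. Let $X$ denote the set of vertices $k$ of $C$ such that $P_k^C$ is an indecomposable summand of some $\textsf{Q}_r^C$, $0\le r\le 3$. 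By Remark \ref{rem:rp1}(3) each $k\in X$ admits an oriented path $i\leadsto k$ in $C$, and by construction of $C = \mbox{Conv}(i,j)$ each $k$ also lies on an oriented path to $j$. Put $P_0 := \bigoplus_{k\in X} P_k^C$ and $\Gamma_0 := \mbox{End}_C(P_0)^{op}$, which is identified with the full subcategory of $C$ supported on $X$; it is a convex subcategory of $C$ containing both $i$ and $j$.

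Next I would verify the axioms of a critical algebra for $\Gamma_0$. Fullness and convexity of $\Gamma_0$ inside $C$, together with \cite{apt}, ensure that Ext-groups between $\Gamma_0$-modules agree with those computed in $C$; in particular $pd_{\Gamma_0} S_i = 3$ and the displayed resolution remains the minimal projective resolution in $\mbox{mod}\,\Gamma_0$, so (iii) holds once we arrange each indecomposable projective to appear in exactly one $\textsf{Q}_r^C$. Axiom (i) would follow from Remark \ref{rem:rp1} together with the minimality selection of $j$ preceding Lemma \ref{lem:pozo-fuente}, which identifies $i$ as the unique source and $j$ as the unique sink of $\Gamma_0$. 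For axiom (ii), Propositions \ref{prop:lemaA} and \ref{prop:lemaB} give $pd_{\Gamma_0} S_k \le 2$ for each $k\in X\setminus\{i\}$, and their duals (mentioned after Theorem \ref{thm:reduccion de la dim gl}) give $id_{\Gamma_0} S_k\le 2$ for each $k\in X\setminus\{j\}$. Axiom (iv) is then axiom (iii) applied dually to the minimal injective coresolution of $S_j$, the crucial point being that the vertex set it produces coincides with $X$.

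To secure axiom (v), I would minimize: starting from $\Gamma_0$, successively delete any vertex whose removal preserves (i)--(iv). Since $X$ is finite, this process terminates at a minimal full subcategory $\Gamma$ of $C$. Taking $P$ to be the direct sum of the indecomposable projectives indexed by the objects of $\Gamma$ yields $\mbox{End}_C(P)^{op}\cong \Gamma$ critical, as required.

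The main obstacle is the coherence between axioms (iii) and (iv) during the minimization step. Concretely, one has to show that the set of vertices appearing in the minimal projective resolution of $S_i$ equals the set appearing in the minimal injective coresolution of $S_j$, and that a vertex placed by (iii) into the $r$-th projective term is placed by (iv) into the $(3-r)$-th injective term. Without this symmetry the minimization could shrink $\Gamma_0$ on one side while violating the other. Establishing the symmetry requires a careful dual application of Proposition \ref{prop:lemaB} together with the minimality of $j$ pinned down by Lemma \ref{lem:pozo-fuente}, after which the minimization can be carried out without loss on either side.
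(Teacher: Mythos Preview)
Your strategy differs substantially from the paper's, and it has a genuine gap.

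The paper does not take \emph{all} vertices appearing in the resolution of $S_i$ over $C$. Instead it isolates two specific subsets: $\mathcal{R}$, the vertices $b$ with $P_b$ a summand of $\textsf{Q}_2$ admitting a nonzero path $b\leadsto j$, and $\mathcal{S}$, the vertices $a$ with $P_a$ a summand of $\textsf{Q}_1$ such that there is a nonzero path $a\leadsto b$ for some $b\in\mathcal{R}$ \emph{and} a relation $\rho:a\leadsto j$. It then sets $P=P_i\oplus\bigoplus_{a\in\mathcal{S}}P_a\oplus\bigoplus_{b\in\mathcal{R}}P_b\oplus P_j$ and describes the quiver and relations of $\Gamma=\mathrm{End}_C\,P$ explicitly: a four-layer quiver $i\to a_k\to b_h\to j$ with one relation $i\leadsto b_h$ for each $h$ and one relation $a_k\leadsto j$ for each $k$. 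From this hands-on description the projective and injective dimensions of every simple $\Gamma$-module are read off directly ($pd\,S_j=0$, $pd\,S_{b_h}=1$, $pd\,S_{a_k}=2$, $pd\,S_i=3$, and dually), so axioms (i)--(iv) are verified by inspection rather than by transfer from $C$.

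Your approach has a structural problem at the outset: you assert that $\Gamma_0$, the full subcategory of $C$ on the set $X$ of all resolution vertices, is \emph{convex} in $C$. This is not justified and in general will fail; there can be vertices of $C$ lying on paths between elements of $X$ whose projectives do not occur in the resolution of $S_i$. Without convexity the appeal to \cite{apt} for $\mathrm{Ext}$-transfer collapses, and there is no reason for the $C$-resolution of $S_i$ to remain a (let alone the minimal) projective resolution over $\Gamma_0$. Consequently none of the claimed verifications of axioms (ii)--(iv) for $\Gamma_0$ go through as stated. The minimization step at the end is then built on sand: you are trying to prune an object whose properties you have not established, and the symmetry between (iii) and (iv) that you flag as the main obstacle is indeed unresolved.

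What the paper's route buys is precisely avoidance of convexity and transfer arguments: by choosing the vertex set via $\mathcal{R}$ and $\mathcal{S}$ and writing down $Q_\Gamma$ and $I_\Gamma$ concretely, everything is checked internally to $\Gamma$. If you want to salvage your approach you would need either to prove convexity of your $X$ (unlikely in general) or to abandon the transfer argument and instead compute the quiver with relations of $\Gamma_0$ directly, at which point you are essentially redoing the paper's computation on a larger vertex set.
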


\begin{proof} Let

  \begin{picture}(80,25)
      \put(147,10){
      \put(-142,-1){\HVCenter{\footnotesize $0$}}
      \put(-137,-2){\vector(1,0){32}}
      \put(-94,0){\HVCenter{\footnotesize $P^{\alpha}_j$}}
      \put(-85,-2){\vector(1,0){32}}
      \put(-27,0){\HVCenter{\footnotesize $\bigoplus^t_{k=1} P^{\alpha_k}_{b_k}$}}
      \put(-2,-2){\vector(1,0){32}}
      \put(55,0){\HVCenter{\footnotesize $\bigoplus^u_{k=1} P_{a_k}$}}
      \put(78,-2){\vector(1,0){32}}
      \put(118,0){\HVCenter{\footnotesize $P_i$}}
      \put(125,-2){\vector(1,0){32}}
      \put(165,0){\HVCenter{\footnotesize $S_i$}}
      \put(172,-2){\vector(1,0){32}}
      \put(210,-2){\HVCenter{\footnotesize $0$}}
     }
  \end{picture}

\noindent be the minimal projective resolution of $S_i$ in $C$.
Consider

\begin{itemize}
\item $\mathcal{R }$ $= \{b \in Q_0 : P_b $ is a direct summand of
$\textsf{Q}_2$ and there is a nonzero path $b \leadsto j \}$,

 \item $\mathcal{S}$ $= \{a \in Q_0 : P_a $ is a direct summand of $\textsf{Q}_1$, there are $b \in \mathcal{R}$ and $\rho \in
I$ such that  $a \leadsto b$ is a nonzero path and  $\rho: a
\leadsto j \} = \{ a_1, \ldots, a_s \}$ ,
\end{itemize}

Assume that, for  $1 \leq k \leq v$, $\rho_k: a_k \leadsto j$,
there are commutativity relations going through  $b_k, b'_k$, and
that $v+1 \leq k \leq s$, $\rho_k$ are monomial relations that go
through $b_k$.

Consider the projective $C$-module $$P = P_i \oplus \bigoplus_{a
\in \mathcal{S}} P_{a} \oplus \bigoplus_{b \in \mathcal{R}} P_{b}
\oplus P_j$$

By abuse of language, we also call  $i, a_1, \ldots , a_s, b_1,
\ldots , b_r, , j$ the vertices of   $\Gamma$, where  $r =
\mbox{Card}\: \mathcal{R }$.

Then in $\Gamma= kQ_{\Gamma}/ I_{\Gamma}$ we have:

\begin{enumerate}
\item[$\star_1.$] $(Q_{\Gamma})_0 = \{i, a_1, \ldots , a_s,
b_1, \ldots , b_r, , j\}$

\item[$\star_2.$] there is a only source $i$ and only sink $j$

\item[$\star_3.$] arrows starting at $i$ are $i \longrightarrow a_k, \  1\leq k \leq s$

\item[$\star_4.$] arrows ending at $j$ are $b_h \longrightarrow j, \  1\leq
h \leq r$

\item[$\star_5.$] there are arrows  $a_k \longrightarrow b_h$, for $1\leq k \leq
s, 1 \leq h \leq r$, where there are nonzero  paths $a_k \leadsto
b_h$ in the algebra $C$

\item[$\star_6.$] $(Q_{\Gamma})_1$ consists of all the arrows mentioned in $\star_3.$, $\star_4.$ and $\star_5.$.

\item[$\star_7.$]  for each $1 \leq h \leq r$, there is a relation $i \leadsto
b_h$ (because this relation exists in $C$)

\item[$\star_8.$] for each $1 \leq k \leq s$, there is a relation $a_k
\leadsto j$ (because this relation exists in $C$)

\item[$\star_9.$] $I_{\Gamma}$ is generated by the relations given in
$\star_7.$ and $\star_9.$

\item[$\star_{10}.$] $pd_{\Gamma}\: S_j = 0$ \  and  \  $id_{\Gamma}\: S_i =
0$, \  since $j$ is the sink and  $i$ is the source in $\Gamma$.

\item[$\star_{11}.$] $pd_{\Gamma}\: S_{b_h} = 1$ for $1\leq h \leq r$
\  and  \  $id_{\Gamma}\: S_{a_k} = 1$ for $1 \leq k \leq s$,
since
 there are no relations starting at $b_h$  and no relations ending at $a_k$ in $\Gamma$.

\item[$\star_{12}.$] $pd_{\Gamma}\: S_{a_k} = 2$ for $1 \leq k \leq
s$ \   and \   $id_{\Gamma}\: S_{b_h} = 2$ for $1\leq h \leq r$,
since there are minimal relations starting at $ a_k $,  and there
are minimal relations ending at $b_h$ in  $\Gamma$.
\end{enumerate}

Then a projective resolution of $S_i$ in $\Gamma$ is of the form

  \begin{picture}(80,20)
      \put(140,10){
     \put(-130,-1){\HVCenter{\footnotesize $0$}}
      \put(-125,-2){\vector(1,0){22}}
      \put(-92,0){\HVCenter{\small $\textsf{Q}^{\Gamma}_3$}}
      \put(-84,-2){\vector(1,0){22}}
      \put(-10,0){\HVCenter{\footnotesize $\bigoplus^s_{k=1} P_{b_k} \oplus \bigoplus^v_{k=1} P_{b'_k}$}}
      \put(38,-2){\vector(1,0){22}}
      \put(48,6){\HVCenter{\footnotesize $f_{2}$}}
      \put(85,0){\HVCenter{\footnotesize $\bigoplus^s_{k=1} P_{a_k}$}}
      \put(108,-2){\vector(1,0){22}}
      \put(118,6){\HVCenter{\footnotesize $f_1$}}
      \put(138,0){\HVCenter{\footnotesize $P_i$}}
      \put(145,-2){\vector(1,0){22}}
      \put(155,6){\HVCenter{\footnotesize $f_0$}}
      \put(175,0){\HVCenter{\footnotesize $S_i$}}
      \put(182,-2){\vector(1,0){22}}
      \put(210,-2){\HVCenter{\footnotesize $0$}}
     }
  \end{picture}

Since $C$ is triangular, so is $\Gamma$. Consequently, the
projective modules that occur as direct summands of the terms
$\textsf{Q}^{\Gamma}_0 = P_i$, \ $\textsf{Q}^{\Gamma}_1 =
\bigoplus^s_{k=1} P_{a_k}$, \ $\textsf{Q}^{\Gamma}_2 =
\bigoplus^s_{k=1} P^{\alpha_k}_{b_k} \oplus \bigoplus^v_{k=1}
P^{\alpha'_k}_{b'_k}$ can  not appear as direct summands of
$\textsf{Q}^{\Gamma}_3$. Then either $\textsf{Q}^{\Gamma}_3 = 0$
or $\textsf{Q}^{\Gamma}_3 = P^{\alpha}_j$.

We show that $\textsf{Q}^{\Gamma}_3 \neq 0$. In fact, since
$\mu_{\textsf{Q}^{\Gamma}_1} (S_j) =  v$ and
$\mu_{\textsf{Q}^{\Gamma}_2} (S_j) = s + v$, then
$\mu_{{\scriptsize \mbox{Ker}} f_2} (S_j) \geq s
> 0$. Since the relations $\rho_k :a_k \leadsto j$ are minimal relations
in $\Gamma$, for each $1 \leq k \leq s$, it follows that  $S_j \in
\mbox{Top Ker}\: f_2$. Therefore,  $pd_{\Gamma}\: S_i = 3$, and
hence $\mbox{gl.dim.}\: \Gamma = 3$ and $id_{\Gamma}\: S_j = 3$.
Then $\Gamma$ is critical. \end{proof}

We give below a description by quivers with relations of all
critical algebras.

\begin{prop}\label{prop:qv de *3}
Let $\Gamma$ be a critical algebra. Then either $\Gamma$ or
$\Gamma^{op}$ is one of the following algebras.

  \begin{picture}(10,100)

   \put(5,65){
             \put(0,25){$A_1:$}
      \put(0,0){\circle*{2}}
      \put(30,0){\circle*{2}}
      \put(60,0){\circle*{2}}
      \put(90,0){\circle*{2}}
      \put(2,0){\vector(1,0){26}}
      \put(32,0){\vector(1,0){26}}
      \put(62,0){\vector(1,0){26}}
            \qbezier[16](10,2)(30,20)(45,2)
            \qbezier[16](38,2)(53,20)(72,2)
    }

   \put(215,65){
             \put(0,25){$A_l:$}
             \put(90,-60){ $\mbox{\textit{for }} \: l \geq 2$}
      \put(0,0){\circle*{2}}
      \put(30,0){\circle*{2}}
      \put(60,-19){\HVCenter{\tiny $2$}}
      \put(60,20){\HVCenter{\tiny $1$}}
      \put(90,0){\circle*{2}}
      \put(60,-60){\HVCenter{\tiny $l$}}
      \put(2,0){\vector(1,0){25}}
      \put(32,2){\vector(3,2){22}}
      \put(32,-2){\vector(3,-2){22}}
      \put(30,-3){\vector(1,-2){26}}
      \put(65,16){\vector(3,-2){22}}
      \put(65,-16){\vector(3,2){22}}
      \put(64,-55){\vector(1,2){26}}
            \qbezier[2](60,-40)(60,-35)(60,-30)
            \qbezier[12](8,3)(30,2)(45,16)
            \qbezier[10](40,0)(60,0)(80,0)
            \qbezier[12](8,-3)(28,-2)(45,-15)
            \qbezier[12](8,-3)(20,-30)(45,-45)
    }
\end{picture}

  \begin{picture}(10,130)
   \put(3,70){
             \put(-2,45){$B_1:$}
      \put(0,20){\circle*{2}}
      \put(30,40){\circle*{2}}
      \put(30,0){\circle*{2}}
      \put(60,20){\circle*{2}}
      \put(60,-20){\circle*{2}}
      \put(90,0){\circle*{2}}
      \put(2,22){\vector(3,2){26}}
      \put(2,18){\vector(3,-2){26}}
      \put(32,39){\vector(3,-2){26}}
      \put(32,1){\vector(3,2){26}}
      \put(62,19){\vector(3,-2){26}}
      \put(32,-2){\vector(3,-2){26}}
      \put(62,-19){\vector(3,2){26}}
            \qbezier[12](10,20)(30,20)(50,20)
            \qbezier[12](40,0)(60,0)(80,0)
            \qbezier[12](47,35)(77,30)(80,13)
            \qbezier[12](5,10)(15,-10)(36,-10)
    }

   \put(155,40){
             \put(-2,75){$B_m:$}
             \put(145,-40){$\mbox{\textit{for }} \: m \geq 3$}
      \put(75,80){\circle*{2}}
      \put(0,40){\HVCenter{\tiny $1$}}
      \put(30,40){\HVCenter{\tiny $2$}}
      \put(61,40){\HVCenter{\tiny $3$}}
            \qbezier[2](75,40)(81,40)(87,40)
      \put(120,40){\HVCenter{\tiny $m-1$}}
      \put(170,40){\HVCenter{\tiny $m$}}
      \put(15,0){\HVCenter{\tiny $1'$}}
      \put(45,0){\HVCenter{\tiny $2'$}}
            \qbezier[2](68,0)(75,0)(82,0)
      \put(105,0){\circle*{2}}
      \put(155,0){\HVCenter{\tiny $m-1$}}
      \put(75,-40){\circle*{2}}

            \qbezier(3,49)(40,75)(70,80)
      \put(2.2,48){\vector(-3,-2){0.1}}
            \qbezier(36,46)(45,55)(73,78)
      \put(36,46){\vector(-1,-1){0.1}}
      \put(75,77){\vector(-1,-3){10}}
            \qbezier(116,48)(105,55)(78,78)
      \put(118,46){\vector(1,-1){0.1}}
            \qbezier(164,48)(110,75)(80,80)
      \put(166.2,46){\vector(3,-2){0.1}}

            \qbezier(0,33)(7,20)(11,8)
      \put(11.5,7){\vector(1,-4){0.1}}
            \qbezier(30,33)(37,20)(41,8)
      \put(41.5,7){\vector(1,-4){0.1}}
            \qbezier(132,33)(137,20)(142,8)
      \put(143,5){\vector(1,-3){0.1}}

            \qbezier(29,33)(23,20)(19,8)
      \put(18.7,7){\vector(-1,-4){0.1}}
            \qbezier(59,33)(53,20)(49,8)
      \put(48.7,7){\vector(-1,-4){0.1}}

            \qbezier(111,33)(109,15)(108,7)
      \put(107.5,4.5){\vector(-1,-4){0.1}}

            \qbezier(169,33)(165,20)(163,8)
      \put(162.7,7){\vector(-1,-4){0.1}}

            \qbezier(18,-7)(40,-35)(68,-40)
      \put(70.2,-40){\vector(1,0){0.1}}
            \qbezier(45,-7)(60,-25)(72,-37)
      \put(73,-38){\vector(1,-1){0.1}}
      \put(103,-3){\vector(-3,-4){26}}
            \qbezier(150,-7)(110,-35)(83,-40)
      \put(80,-40){\vector(-1,0){0.1}}

            \qbezier[12](60,75)(5,55)(15,14)
            \qbezier[12](68,70)(42,50)(44,14)
            \qbezier[16](90,72)(162,50)(155,12)

            \qbezier[12](29,25)(18,-10)(60,-34)
            \qbezier[12](120,27)(140,-10)(92,-33)

            \qbezier[12](0,20)(-15,-10)(30,-24)
            \qbezier[16](170,20)(205,-15)(130,-24)

    }

\end{picture}

  \begin{picture}(50,95)
         \put(100,5){
                \put(0,75){$Q_n:$}
                \put(90,-5){$\mbox{\textit{for }} \: n \geq 2$.}
      \put(50,80){\circle*{2}}
      \put(66,57){\HVCenter{\tiny $3$}}
      \put(38,57){\HVCenter{\tiny $2$}}
      \put(13,57){\HVCenter{\tiny $1$}}
      \put(104,55){\HVCenter{\tiny $n$}}
      \put(66,20){\circle*{2}}
      \put(104,20){\circle*{2}}
      \put(13,20){\circle*{2}}
      \put(38,20){\circle*{2}}
      \put(51,-5){\circle*{2}}
      \put(48,80){\vector(-3,-2){32}}
      \put(49,78){\vector(-1,-2){9}}
      \put(51,78){\vector(2,-3){12}}
      \put(52,80){\vector(2,-1){48}}
      \put(66,53){\vector(0,-1){30}}
      \put(38,53){\vector(0,-1){30}}
      \put(12,53){\vector(0,-1){30}}
      \put(104,52){\vector(0,-1){30}}
      \put(102,52){\vector(-3,-1){87}}
      \put(14,18){\vector(3,-2){34}}
      \put(39,18){\vector(1,-2){10}}
            \put(68,54){\line(3,-2){10}}
            \put(102,22){\line(-3,2){10}}
            \put(102,22){\vector(3,-2){0.10}}
      \put(65,18){\vector(-2,-3){13.5}}
      \put(102,19){\vector(-2,-1){48}}
      \put(13,53){\vector(3,-4){23}}
      \put(40,54){\vector(3,-4){23.5}}
             \qbezier[3](78,55)(83,55)(88,55)
             \qbezier[3](78,20)(83,20)(88,20)
    }
\end{picture}

\end{prop}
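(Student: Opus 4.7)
The plan is to exploit the forced shape of $\Gamma$ coming from conditions (i)--(iv) of the definition, together with the descriptions of the first three terms of the minimal projective resolution obtained in Propositions \ref{prop:lemaA} and \ref{prop:lemaB}, and then to invoke the minimality condition (v) to carry out a finite case analysis of the resulting quiver-with-relations data.

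First I would identify the underlying layered shape of $Q_\Gamma$. By (i) there is a unique source $i$ and a unique sink $j$, and by (ii) every simple other than $S_i,S_j$ has both projective and injective dimension at most $2$. Combined with (iii), this forces every indecomposable projective $P_v$ with $v\notin\{i,j\}$ to be a summand of exactly one of $\textsf{Q}_1$ or $\textsf{Q}_2$; call the corresponding sets of vertices $\mathcal{S}=\{a_1,\dots,a_s\}$ and $\mathcal{R}=\{b_1,\dots,b_r\}$. Remark \ref{rem:rp1}(1) produces an arrow $i\to a_k$ for each $a_k\in\mathcal{S}$, the dual statement applied to the injective resolution of $S_j$ (using (iv)) produces the arrows $b_h\to j$, and triangularity together with the exactness of the $(\mathcal{S},\mathcal{R})$ partition forces every remaining arrow to go from $\mathcal{S}$ to $\mathcal{R}$. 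Thus $Q_\Gamma$ is four-layered: $\{i\}\to\mathcal{S}\to\mathcal{R}\to\{j\}$. Proposition \ref{prop:lemaA} applied to $S_i$ then attaches to each $b_h$ a minimal relation $i\leadsto b_h$ (monomial or commutative), and dually each $a_k$ is the source of a minimal relation $a_k\leadsto j$. Since $P_j$ is a summand of $\textsf{Q}_3$, Proposition \ref{prop:lemaB} (after possibly passing to $\Gamma^{\mathrm{op}}$ so that $s\geq r$) yields at least $s-r+1$ monomial relations from $\mathcal{S}$ to $j$, with at least one such relation minimal.

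Next I would use minimality (v) to enumerate the surviving configurations. Any proper full subcategory obtained by deleting a vertex of $\mathcal{S}\cup\mathcal{R}$ must destroy one of (i)--(iv): either it disconnects the source/sink structure, or it drops $pd\, S_i$ or $id\, S_j$ below $3$, or it ruins the ``each projective in exactly one $\textsf{Q}_k$'' counting. Translating this obstruction into constraints on the bipartite arrow graph between $\mathcal{S}$ and $\mathcal{R}$ and on the distribution of monomial versus commutativity relations, I would split into cases by $(s,r)$: the case $s=r=1$ gives $A_1$; the case $s=1$, $r=l$ (or its opposite) gives $A_l$; intermediate cases, where the bipartite arrow pattern is a near-matching augmented by a few extra arrows and where some minimal relations are monomials and others commutative, yield the family $B_m$; the fully symmetric case, where the bipartite pattern between $\mathcal{S}$ and $\mathcal{R}$ is ``complete'' and every minimal relation is a commutativity relation descending from an incidence algebra, yields $Q_n$.

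The hard part will be the final case analysis: showing that anything falling outside the three listed families either (a) admits a proper full subcategory already satisfying (i)--(iv) and so contradicts (v), or (b) fails one of the resolution counts in (iii)--(iv). The delicate bookkeeping lies in the interaction between the relations $i\leadsto b_h$ (required by Proposition \ref{prop:lemaA} to be minimal) and the relations $a_k\leadsto j$ (required by Proposition \ref{prop:lemaB} to be minimal and of a prescribed monomial count), because both sets of relations must be realised through the same bipartite arrow structure between $\mathcal{S}$ and $\mathcal{R}$; showing that these mutual compatibility constraints admit only the listed quivers is where the bulk of the combinatorial work will be.
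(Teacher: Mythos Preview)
Your approach is essentially the same as the paper's: both establish the four-layer vertex partition $\{i\}\cup\mathcal{S}\cup\mathcal{R}\cup\{j\}$ with the prescribed arrows and relations, then enumerate by varying $s,r$ under the minimality constraint (v). The only cosmetic difference is that the paper packages the structural step by citing ``the proof of Theorem~\ref{thm:lema 7}'' (whose $\star_1$--$\star_{12}$ already record the layered shape, arrows, and relations), whereas you go back to Propositions~\ref{prop:lemaA} and~\ref{prop:lemaB} directly; your plan for the final case analysis is in fact more detailed than what the paper writes, since the paper simply asserts that ``variating $s$ and $r$, and using the property v)'' yields the list.
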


\begin{proof} Let $\Gamma = kQ/I$ be a critical algebra.
From the proof of the Theorem \ref{thm:lema 7} and the properties
$iii)$ and $iv)$ of the critical algebras, it follows that there
is the following partition in the set of vertices of  $\Gamma$:
$$Q_0=\{ i \} \cup\{a_1, \ldots , a_s \} \cup \{b_1, \ldots, b_r\}
\cup \{ j \}$$ such that:
\begin{itemize}
\item there are $i \longrightarrow a_k \in Q_0$, for all $1 \leq k \leq
s$,
\item there are arrows of the form $a_k \longrightarrow b_h$,
\item there are $b_h \longrightarrow j \in Q_0$, for all $1 \leq h \leq
r$,
\item for each $1 \leq h \leq r$, there is a relation $i \leadsto
b_h$.
\item for each $1 \leq k \leq s$, there is a relation $a_k
\leadsto j$,
\end{itemize}

Then, variating $s$ and $r$, and using the property $v)$, we get
all the quivers with relations for $\Gamma$. These quivers are the
ones listed above, or their oppossite quivers. \end{proof}

We are now able to state our main theorem.

\begin{thm}\label{thm:ida}
Let $A = kQ / I$ be a strongly simply connected schurian algebra
with $\mathrm{gl.dim.}$$\: A \geq 3$. Then there exists a full
subcategory  $B$ of $A$ such that $B$ is critical.
\end{thm}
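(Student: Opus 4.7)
The plan is to reduce the problem to the setting of Theorem \ref{thm:lema 7} and then observe that the critical algebra produced there is naturally a full subcategory of $A$.

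First I would apply Theorem \ref{thm:reduccion de la dim gl}: since $\mathrm{gl.dim.}\,A \geq 3$, there exists a simple $A$-module $S_i$ with $pd_A\,S_i = 3$. Writing its minimal projective resolution $0 \to \textsf{Q}_3 \to \textsf{Q}_2 \to \textsf{Q}_1 \to P_i \to S_i \to 0$, I would pick an indecomposable direct summand $P_j$ of $\textsf{Q}_3$, which exists because $pd_A\,S_i = 3$ forces $\textsf{Q}_3 \neq 0$. Next I would pass to the convex hull $C = \mathrm{Conv}(i,j)$. As recalled before Lemma \ref{lem:pozo-fuente}, $C$ is a full convex subcategory of $A$ which is again strongly simply connected and schurian, and $\mathrm{Ext}^k_A(X,Y) \cong \mathrm{Ext}^k_C(X,Y)$ for all $X, Y \in \mathrm{mod}\,C$ and $k \geq 0$. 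Lemma \ref{lem:pozo-fuente} then gives $pd_C\,S_i = 3$, so that $P_j$ remains a summand of the third term of the minimal $C$-projective resolution of $S_i$. Theorem \ref{thm:lema 7} applies and yields a projective $C$-module $P$ such that $\Gamma := \mathrm{End}_C\,P$ is critical.

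To conclude I would identify $\Gamma$ with a full subcategory of $A$. Let $V \subseteq (Q_C)_0$ denote the set of vertices indexing the indecomposable direct summands of $P$, and set $e_V = \sum_{v \in V} e_v$. Since $P$ is a direct sum of indecomposable projective $C$-modules indexed by $V$, one has $\Gamma \cong e_V C e_V$. Because $C$ is a full subcategory of $A$ we have $C(x,y) = A(x,y)$ for all $x,y \in V$, so $e_V C e_V = e_V A e_V$, which by definition is the full subcategory of $A$ on the vertex set $V$. Taking $B := \Gamma$ finishes the proof. The substantive work has been delegated to Theorems \ref{thm:reduccion de la dim gl} and \ref{thm:lema 7}; the main obstacle for the present theorem is purely bookkeeping, namely verifying that the composition of the full convex inclusion $C \hookrightarrow A$ with the full inclusion $e_V C e_V \hookrightarrow C$ realises $\Gamma$ as $e_V A e_V$ and hence as a genuine full subcategory of $A$ in the sense of the introduction.
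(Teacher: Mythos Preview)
Your proof is correct and follows exactly the same route as the paper: invoke Theorem~\ref{thm:reduccion de la dim gl} to find $S_i$ with $pd_A\,S_i=3$, pick $P_j$ in $\textsf{Q}_3$, pass to $C=\mathrm{Conv}(i,j)$, apply Lemma~\ref{lem:pozo-fuente} and Theorem~\ref{thm:lema 7}. You additionally spell out the bookkeeping identifying $\Gamma=\mathrm{End}_C\,P$ with $e_V A e_V$ via the fullness of $C$ in $A$, a point the paper's proof leaves implicit.
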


\begin{proof} Since $\mathrm{gl.dim.}$$\: A \geq 3$, by
Theorem \ref{thm:reduccion de la dim gl}, there exists a vertex $i
\in Q_0$ with $dp\: S_i = 3$ and a vertex $j \in Q_0$ such that
$P_j$ is a direct summand of the term $\textsf{Q}_3$ of the
minimal projective resolution  of $S_i$. Consider
$\mbox{Conv}(i,j)$ and apply Lemma  \ref{lem:pozo-fuente} and
Theorem \ref{thm:lema 7} to obtain the desired result.
\end{proof}

\begin{exmp}
Let $A$ be the algebra given by the following quiver with
relations

\begin{picture}(10,57)

   \put(115,27){
      \put(0,0){\circle*{2}}
             \put(-3,0){\RVCenter{\tiny $1$}}
      \put(40,0){\circle*{2}}
             \put(41,-5){\RVCenter{\tiny $2$}}
      \put(80,-20){\circle*{2}}
             \put(82,-25){\RVCenter{\tiny $3$}}
      \put(80,20){\circle*{2}}
             \put(82,25){\RVCenter{\tiny $4$}}
      \put(120,0){\circle*{2}}
             \put(124,5){\RVCenter{\tiny $5$}}
      \put(160,0){\circle*{2}}
             \put(167,0){\RVCenter{\tiny $6$}}
      \put(2,0){\vector(1,0){36}}
      \put(42,2){\vector(2,1){36}}
      \put(42,-2){\vector(2,-1){36}}
      \put(82,20){\vector(2,-1){36}}
      \put(82,-20){\vector(2,1){36}}
      \put(122,0){\vector(1,0){36}}
            \qbezier[16](8,3)(40,2)(68,18)
            \qbezier[16](50,0)(80,0)(110,0)
            \qbezier[16](145,-2)(115,-5)(92,-17)
    }

\end{picture}

\noindent This algebra is strongly simply connected and schurian
with global dimension three. Note that $A$ contains a critical
algebra. In fact, if we consider  $B = \mathrm{End}$$_A (P_1
\oplus P_2 \oplus P_5 \oplus P_6)$, we get that $B$ is a critical
algebra,  which is isomorphic to $A_1$.

\end{exmp}

It follows from Theorem \ref{thm:ida}, that if a strongly simply
connected schurian algebra  $A$ does not contain a critical full
subcategory, then  $\mbox{gl.dim}\: A \leq 2$. Therefore, we give
a sufficient condition for deciding if the algebra has global
dimension two.

The following example shows that the converse of Theorem
\ref{thm:ida} does not hold in general. An algebra of global
dimension two may
 have a critical full subcategory.

\begin{exmp}
Consider the algebra  $A=kQ/I$, given by the following quiver with
relations

\begin{picture}(10,30)

   \put(125,10){
      \put(0,0){\circle*{2}}
             \put(-3,0){\RVCenter{\tiny $1$}}
      \put(30,0){\circle*{2}}
             \put(30,-5){\RVCenter{\tiny $2$}}
      \put(60,0){\circle*{2}}
             \put(60,-5){\RVCenter{\tiny $3$}}
      \put(90,0){\circle*{2}}
             \put(90,-5){\RVCenter{\tiny $4$}}
      \put(120,0){\circle*{2}}
             \put(120,-5){\RVCenter{\tiny $5$}}
      \put(150,0){\circle*{2}}
             \put(157,0){\RVCenter{\tiny $6$}}
      \put(2,0){\vector(1,0){25}}
      \put(32,0){\vector(1,0){25}}
      \put(62,0){\vector(1,0){25}}
      \put(92,0){\vector(1,0){25}}
      \put(122,0){\vector(1,0){25}}
            \qbezier[16](5,3)(30,25)(50,3)
            \qbezier[16](95,3)(120,25)(140,3)
    }

\end{picture}

The algebra $A$ is  strongly simply connected and schurian, with
global dimension two. However, the subcategory  $B =
\mathrm{End}$$_A (P_1 \oplus P_2 \oplus P_5 \oplus P_6)$ is
critical, isomorphic to $A_1$.
\end{exmp}


\end{document}